\newtheorem{thm}{Theorem}[section]
\newtheorem{prop}[thm]{Proposition}
\newtheorem{lem}[thm]{Lemma}
\theoremstyle{definition}
\newtheorem{def.}[thm]{Definition}
\newtheorem*{acknowledgments}{Acknowledgments}
\newtheorem{rem}[thm]{Remark}
\newcommand\begin{figure}[t]
\newcommand\congto{\overset{\cong}{\rightarrow }}
\renewcommand\int{\operatorname{int}}
\newcommand \C {\mathbb{C}}
\newcommand \Z {\mathbb{Z}}
\newcommand \id {\operatorname{id}}
\title[Kirby calculus for null-homotopic framed links]{On Kirby
  calculus for null-homotopic framed links in $3$-manifolds}
\author[Kazuo Habiro]{Kazuo Habiro}
\address{Research Institute for Mathematical Sciences, Kyoto
University, Kyoto 606-8502, Japan}
\email{habiro@kurims.kyoto-u.ac.jp}
\author[Tamara Widmer]{Tamara Widmer}
\address{Universit\"at Z\"urich, Winterthurerstr. 190
CH-8057 Z\"urich, Switzerland}
\email{tamara.widmer@math.uzh.ch}
\date{June 20, 2013 (First version: January 30, 2013)}
\begin{document}

\begin{abstract}
  Kirby proved that two framed links in $S^3$ give
  orientation-preserving homeomorphic results of surgery if and only
  if these two links are related by a sequence of two kinds of moves
  called stabilizations and handle-slides.  Fenn and Rourke gave a
  necessary and sufficient condition for two framed links in a closed,
  oriented $3$-manifold to be related by a finite sequence of these
  moves.

  The purpose of this paper is twofold. We first give a generalization
  of Fenn and Rourke's result to $3$-manifolds with boundary.  Then we
  apply this result to the case of framed links whose components are
  null-homotopic in the $3$-manifold.
\end{abstract}

\maketitle


\section{Introduction}

In 1978, Kirby \cite{Kirby:1978} proved that two framed links in $S^3$
have homeomorphic result of surgery if and only if they are related by
a sequence of two kinds of moves called stabilizations and
handle-slides.  This result enables one to construct a $3$-manifold
invariant by constructing a link invariant which is invariant under
these moves.  Fenn and Rourke \cite{FennRourke:1978} generalized
Kirby's theorem to framed links in closed $3$-manifolds, and
Roberts \cite{Roberts:1997} generalized it to framed links in
$3$-manifolds with boundary.

Fenn and Rourke \cite{FennRourke:1978} also considered the equivalence
relation on framed links in an arbitrary closed, oriented
$3$-manifold generated by stabilizations and handle-slides. Here we
state Fenn and Rourke's theorem, leaving some details to the original
paper \cite{FennRourke:1978}. Let $M$ be a closed, oriented
$3$-manifold.  For a framed link $L$ in $M$, we will denote by $W_L$
the $4$-manifold obtained from $M\times I$ by attaching $2$-handles
along $L\times\{1\}\subset\partial(M\times I)$ in a way determined by
the framing. Note that $W_L$ is a
cobordism between $M$ and $M_L$, where $M_L$ denotes the $3$-manifold
obtained from $M$ by surgery along $L$. The inclusions
$M_L\hookrightarrow W_L \hookleftarrow M$ induce surjective
homomorphisms
\begin{gather*}
  \pi_1(M_L)\twoheadrightarrow \pi_1(W_L)\twoheadleftarrow \pi_1(M).
\end{gather*}
The kernel of the homomorphism $\pi_1(M)\rightarrow \pi_1(W_L)$ is normally generated
by the homotopy classes of components of $L$.

\begin{thm}[Fenn--Rourke \cite{FennRourke:1978}]\label{FR}
Let $M$ be a closed, oriented $3$-manifold, and let $L$ and $L'$ be
two framed links in $M$.  Then $L$ and $L'$ are related by a sequence
of stabilizations and handle-slides if and only if there exist an
orientation-preserving homeomorphism $h\colon M_{L}\to M_{L'}$ and an
isomorphism
\begin{gather*}
f\colon  \pi_1(W_{L})\rightarrow \pi_1(W_{L'}) ,
\end{gather*}
such that the diagram
\begin{equation}
 \divide\dgARROWLENGTH by2
\begin{diagram} \label{diagram:delta}
\node{\pi_1(M_{L})}     \arrow[2]{e,t}{h_*}\arrow{s,l}{}
\node[2]{\pi_1(M_{L'})} \arrow{s,r}{}                   \\
\node{\pi_1(W_{L})}   \arrow[2]{e,t}{f}
\node[2]{\pi_1(W_{L'})}    \\
\node[2]{\pi_1(M)} \arrow{nw,b}{}\arrow{ne,r}{}
\end{diagram}
\end{equation}
commutes and we have $\rho_*([W])=0\in H_4(\pi_1(W_L),\Z)$.  Here
\begin{itemize}
\item $W$ is the closed $4$-manifold obtained from $W_L$ and $W_{L'}$ by gluing along their boundaries
using $\id_M$ and $h$,
\item $[W]\in H_4(W,\Z)$ is the fundamental class, and
\item $\rho_*\colon H_4(W,\Z)\to H_4(\pi_1(W_L),\Z)$ is induced by a map
  $\rho\colon W\to K(\pi_1(W_L),1)$ obtained by gluing natural maps from
  $W_L$ and $W_{L'}$ to $K(\pi_1(W_L),1)$. 
\end{itemize}
See \cite{FennRourke:1978} for more details.
\end{thm}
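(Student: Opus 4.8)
The plan is to treat the two implications separately, following the $5$-dimensional handle argument used by Kirby and adapted by Fenn and Rourke.

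\emph{The ``only if'' direction is straightforward.} For a single move on $L$ one exhibits data $(h,f)$ satisfying all the conditions, and checks that such data compose along a sequence of moves. A handle-slide replaces the attaching link of $W_L$ by a link isotopic to it inside the trace of the surgery, so it changes neither $M_L$ nor $W_L$ (hence nor $\pi_1(W_L)$) up to a canonical homeomorphism; take $h=\id$ and $f=\id$. A stabilization by a $\pm1$-framed unknot in a ball disjoint from $L$ leaves $M_L$ unchanged, since surgery on such an unknot in $S^3$ yields $S^3$, and replaces $W_L$ by the connected sum $W_L\#(\pm\mathbb{C}P^2)$, which has the same fundamental group; the required $(h,f)$ are again evident. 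Composing these identifications along a sequence of moves from $L$ to $L'$, the closed manifold $W$ becomes the connected sum of the double $W_L\cup_{\id}(-W_L)$ with finitely many copies of $\pm\mathbb{C}P^2$; the double bounds $W_L\times I$, over which the reference map to $K(\pi_1(W_L),1)$ extends, and each $\mathbb{C}P^2$ summand maps null-homotopically, so $\rho_*([W])=0$.

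\emph{For the ``if'' direction}, set $\pi=\pi_1(W_L)$ and use $f$ to identify $\pi_1(W_{L'})$ with $\pi$; commutativity of diagram~\eqref{diagram:delta} means the classifying maps $W_L\to K(\pi,1)\leftarrow W_{L'}$ agree on $\partial W_L$ and $\partial W_{L'}$ (under $\id_M$ and $h$) and glue to the given $\rho\colon W\to K(\pi,1)$. First stabilize $L$ and $L'$ by split $\pm1$-framed unknots until $\sigma(W)=0$; by the above this affects neither the diagram nor $\rho_*([W])$. Granting that $(W,\rho)$ is then null-bordant over $K(\pi,1)$ (see below), choose a compact oriented $5$-manifold $\mathcal W$ with $\partial\mathcal W=W$ and a map $\mathcal W\to K(\pi,1)$ extending $\rho$, with the part of $\partial\mathcal W$ over $\partial W_L=M\sqcup M_L$ a product collar; thus $\mathcal W$ is a cobordism, rel boundary and rel the reference maps, from $W_L$ to $W_{L'}$. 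After a surgery in the interior of $\mathcal W$ we may assume $\pi_1(\mathcal W)\cong\pi$ compatibly with the two ends; then cancelling $0$- and $5$-handles (all manifolds being connected) and trading $1$-handles for $3$-handles and dually $4$-handles for $2$-handles --- legitimate since both ends already have $\pi_1=\pi$ --- reduces $\mathcal W$ to handles of index $2$ and $3$ only. Reading $\mathcal W$ from the $W_L$ end to the $W_{L'}$ end, crossing a $2$-handle contributes, in the surgery diagram, a stabilization (birth of a $\pm1$-framed unknot) followed by handle-slides, and crossing a $3$-handle contributes handle-slides followed by a destabilization; concatenating these, $L$ is carried to $L'$ by a finite sequence of stabilizations and handle-slides.

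\emph{The crux is the null-bordism claim.} The hypothesis only gives $\rho_*([W])=0\in H_4(\pi,\Z)$, whereas the true obstruction to bounding is the bordism class $[W,\rho]\in\Omega_4^{SO}(K(\pi,1))$. By the Atiyah--Hirzebruch spectral sequence this group differs from $H_4(\pi,\Z)$ by a $\Z$ recorded by the signature --- which we have arranged to vanish --- and by a possible class in $H_3(\pi,\Z/2)$ coming from $\Omega_1^{SO}=\Z/2$. One must check that this residual class also vanishes for the $W$ at hand; I would do this using that $W$ is a union of two copies of $M\times I$ with $2$-handles attached, which constrains its characteristic classes, and otherwise by a further stabilization of the links, following \cite{FennRourke:1978}. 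A second, more routine, difficulty is controlling the product collar on the vertical boundary of $\mathcal W$ throughout the handle manipulations, so that every intermediate $4$-manifold in the resulting movie is genuinely the trace of surgery on a framed link in $M$; this requires care but no new idea.
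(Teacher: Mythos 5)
Your outline is essentially Fenn and Rourke's original argument, which is also the skeleton the paper follows when it proves its generalization (Theorem \ref{thm:FR-generalization}): the paper packages the bordism step as Lemma \ref{lem:FR-Lemma9} (``$\rho_*([W])=0$ iff $W\#(\pm\C P^2)$'s bounds a $5$-manifold $\Omega$ with the $\pi_1$-diagram commuting and $j_*,j'_*$ split injections''), then does surgery on $\Omega$ to fix $\pi_1$, and finishes by invoking Cerf's theorem to compare the two relative handle decompositions rather than reading the $2$- and $3$-handles of the cobordism directly --- the two endgames are interchangeable. The one concrete correction: what you call the crux is not actually a gap, because $\Omega_1^{SO}=0$ (every closed oriented $1$-manifold bounds), not $\Z/2$; you are thinking of framed or unoriented bordism. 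Since $\Omega_1^{SO}=\Omega_2^{SO}=\Omega_3^{SO}=0$, the Atiyah--Hirzebruch spectral sequence for $\Omega_4^{SO}(K(\pi,1))$ has only the terms $H_4(\pi;\Z)$ and $H_0(\pi;\Omega_4^{SO})\cong\Z$ in total degree $4$, with no room for differentials, so $\Omega_4^{SO}(K(\pi,1))\cong H_4(\pi;\Z)\oplus\Z$ with the $\Z$ detected by the signature. Hence once you have arranged $\sigma(W)=0$ by stabilizations, the hypothesis $\rho_*([W])=0$ already gives the null-bordism over $K(\pi,1)$; there is no residual class in $H_3(\pi;\Z/2)$ to check. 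The remaining points you flag (splitting of $\pi_1(W_L)\to\pi_1(\mathcal W)$, which follows from the retraction to $K(\pi,1)$, and the product structure over $M$ and $M_L$ during the handle manipulations) are the standard bookkeeping of \cite{FennRourke:1978} and are handled there.
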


One of the main results of the present paper, Theorem \ref{thm:FR-generalization},
is a generalization of Theorem \ref{FR} to $3$-manifolds with
boundary.  (A generalization of Theorem \ref{FR} to
$3$-manifolds with boundary has been stated in
\cite{Garoufalidis-Kricker:2002}, but unfortunately the statement in \cite{Garoufalidis-Kricker:2002} is not
correct for $3$-manifolds with more than one boundary components.)

An obstruction to making Theorems \ref{FR} and
\ref{thm:FR-generalization} useful is the homological condition
$\rho_*([W])=0$.  Given framed links $L,L'$ in $M$ as in Theorems \ref{FR}
and \ref{thm:FR-generalization}, it is not always easy to see whether
we have $\rho_*([W])=0$ or not. 
However, if $H_4(\pi_1(W_L),\Z)=0$, then clearly we have $\rho_*([W])=0$.  

A large class of groups with vanishing $H_4(-,\Z)$ is the $3$-manifold
groups.  It seems to have been well known for a long time that if $M$
is a compact, connected, oriented $3$-manifold, then we have
$H_4(\pi_1(M),\Z)=0$ (see Lemma \ref{r3}).  So, if the components of
the framed links $L$ and $L'$ in $M$ are null-homotopic, then since
$\pi_1(W_L)\cong\pi_1(M)$ is a $3$-manifold group, we have
$H_4(\pi_1(W_L),\Z)=0$ and $\rho_*([W])=0$.  Thus, for null-homotopic framed
links, we do not need the condition $\rho_*([W])=0$, see Theorem
\ref{r4}.

Cochran, Gerges and Orr \cite{Cochran-Gerges-Orr} studied surgery
along null-homologous framed links with diagonal linking matrices with
diagonal entries $\pm1$, and also surgery along more special classes
of framed links.  This includes null-homotopic framed links with
diagonal linking matrices with diagonal entries $\pm1$.  Let us call
such a framed link {\em $\pi_1$-admissible}.  Surgery along a
$\pi_1$-admissible framed link $L$ in a $3$-manifold $M$ gives a
manifold $M_L$ whose fundamental group is ``very close'' to that of
$M$.  In \cite{Cochran-Gerges-Orr} it is proved that, for all $d\ge1$,
we have $\pi_1(M_L)/\Gamma_d\pi_1(M_L)\cong\pi_1(M)/\Gamma_d\pi_1(M)$,
where for a group $G$, $\Gamma_dG$ denotes the $d$th lower central
series subgroup of $G$. 

For $\pi_1$-admissible framed links in a $3$-manifold, we can combine
Theorem~\ref{r4} with Proposition \ref{prop:6} proved by the first
author \cite{Habiro:2006} to obtain a refined version of Theorem
\ref{r4}, see Theorem \ref{prop:admissible}.  This theorem gives a
necessary and sufficient condition for two $\pi_1$-admissible framed
links in $M$ to be related by a sequence of stabilizations and {\em
  band-slides} \cite{Habiro:2006}, which are pairs of algebraically
cancelling handle-slides, see Section \ref{section:admissible}.

We apply Theorem  \ref{prop:admissible} to surgery along
null-homotopic framed links in cylinders over surfaces.  Surgery along
a $\pi_1$-admissible framed link in a cylinder over a surface gives a
homology cylinder of a special kind.

The organization of the rest of the paper is as follows.  In Section
\ref{sec:gener-fenn-rourke-1}, we introduce some notations and
preliminary facts, and then state and prove the generalization of Fenn
and Rourke's theorem to $3$-manifolds with boundary.  In
Section~\ref{section: application1}, we focus on the case of
null-homotopic framed links. In Section~\ref{section:admissible}, we
consider $\pi_1$-admissible framed links.  In Section
\ref{sec:example}, we give an example which illustrates the conditions
needed in Theorem~\ref{thm:FR-generalization}.

\begin{acknowledgments}
  The first author was partially supported by JSPS, Grant-in-Aid for
  Scientific Research (C) 24540077.  The second author was supported
  by SNF, No. 200020\_134774/1.
  
  The authors thank Anna Beliakova for encouraging conversations.
\end{acknowledgments}

\section{Generalization of Fenn and Rourke's Theorem}
\label{sec:gener-fenn-rourke-1}

In this section we state and prove a generalization of Theorem
\ref{FR} to $3$-manifolds with nonempty boundary.  We start by giving
necessary notations which are used throughout this paper. Then we
introduce the conditions under which Theorem \ref{FR} holds for
manifolds with boundary and give the statement and the proof of our
generalization of Theorem \ref{FR}.  Our construction mainly follows
\cite{FennRourke:1978} and borrows some ideas also from
\cite{Garoufalidis-Kricker:2002}.

Let $M$ be a compact, connected, oriented $3$-manifold, possibly with
nonempty boundary.

A {\em framed link} $L=L_1\cup\dots\cup L_l$ in $M$ is a link (i.e.,
disjoint union of finitely many embedded circles in $M$) such that
each component $L_i$ of $L$ is given a framing, i.e., a homotopy class
of trivializations of the normal bundle.  Such a framing of $L_i$ may
be given as a homotopy class of a simple closed curve $\gamma_i$ in the
boundary $\partial N(L_i)$ of a tubular neighborhood $N(L_i)$ of
$L_i$ in $M$ which is homotopic to $L_i$ in $N(L_i)$.

For a framed link $L\subset M$ as above, let $M_L$ denote the result
from $M$ of surgery along $L$.  This manifold is obtained from $M$ by
removing the interiors of $N(L_i)$, and gluing a solid torus
$D^2\times S^1$ to $\partial N(L_i)$ so that the curve $\partial
D^2\times \{*\}$, $*\in S^1$, is attached to $\gamma_i\subset\partial N(L_i)$ for
each $i=1,\ldots,l$.

Surgery along a framed link can be defined by using $4$-manifolds as
well.  Let $L$ be a framed link in $M$.  Let $W_L$ denote the
$4$-manifold obtained from the cylinder $M \times I$ by attaching a
$2$-handle $h_i\cong D^2\times D^2$ along $N(L_i)\times\{1\}$ using
the a homeomorphism
\begin{gather*}
  S^1\times D^2\overset{\cong}{\to} N(L_i),
\end{gather*}
which maps $S^1\times\{*\}$, $*\in\partial D^2$, onto the framing $\gamma_i$.
We have a natural identification
$$\partial W_L\cong M\underset{\partial M}{\cup}
(\partial M\times I)\underset{\partial M_L}{\cup} M_L,$$
Thus, $W_L$ is a cobordism between $M$ and $M_L$. 
Note that $\partial W_L$ is connected if $\partial M\neq\emptyset$.

We define two moves on framed links. A {\em handle-slide} replaces one
component $L_i$ of $L$ with a band sum $L_i'$ of $L_i$ and a parallel copy
of another component $L_j$ as in Figure 
\ref{Fhandleslide}, where the blackboard framing convention is used. 
A {\em stabilization} adds to or removes from a link $L$ an isolated
$\pm1$-framed unknot.

\begin{figure}[t]
    \begin{center}\input{Fhandleslide.pstex_t}\end{center}
    \caption{(a) Two components $L_i$ and $L_j$ of a framed link.  (b) The result of a handle-slide of $L_i$ over $L_j$.}
    \label{Fhandleslide}
  \end{figure}

\subsection{Some notations}
\label{sec:some-notations-1}
We introduce some notations which we need in the statement of our
generalization of Theorem \ref{FR}, and which will be used in later
sections as well.

Let $M$ be a compact, connected, oriented $3$-manifold with {\em nonempty}
boundary.

  Let $F_1, \ldots, F_n$ ($n\ge1$) denote the components of $\partial
M$.  For each $k=1,\ldots,n$, choose a base point $p_k \in F_k$.  We
denote by $\pi_1(M;p_1,p_k)$ the set of homotopy classes of paths from
$p_1$ to $p_k$ in $M$.  We consider $p_1$ as the base point of $M$,
and write
\begin{gather*}
  \pi_1(M)=\pi_1(M;p_1)=\pi_1(M;p_1,p_1).
\end{gather*}

Let $L$ be a framed link in $M$ as before.  
We consider the $4$-manifold $W_L$ defined in Section
\ref{sec:gener-fenn-rourke-1}.
For $k=1,\dots,n$, set $p_k^L=p_k\times \{1\}\in\partial M_L$ and
$\gamma_k=p_k\times I \subset \partial W_L$.  Note that $\gamma _k$ is
an arc in $\partial W$ from $p_k\in\partial M\subset \partial W_L$ to
$p_k^L$.

  The inclusions
\begin{equation*}  M \overset{i}{\hookrightarrow} W_L  \overset{i'}{\hookleftarrow} M_L \end{equation*}
 induce surjective maps 
$$\pi_1(M;p_1,p_k)\overset{i_k}{\longrightarrow} \pi_1(W_L;p_1,p_k) \overset{i'_k}{\longleftarrow} \pi_1(M_L;p^L_1,p^L_k)$$
for $k=1,\ldots,n$.  Here $i'_k$ is defined to be the composition
$$\pi_1(M_L;p^L_1,p^L_k)\overset{i'_k}{\longrightarrow} \pi_1(W_L;p^L_1,p^L_k)\underset{\gamma_1,\gamma_k}{\cong} \pi_1(W_L;p_1,p_k),$$
where the second isomorphism is induced by the arcs $\gamma_1$ and $\gamma_k$.

We regard $p_1^L$ as the base point of $M_L$  and write
$\pi_1(M_L):=\pi_1(M_L;p_1^L)$.
The point $p_1$ is regarded as a base point of $W_L$ as well as of $M$, and we
set $\pi_1(W_L):=\pi_1(W_L;p_1)$. 

An Eilenberg--Mac Lane space $K(\pi_1(W_L),1)$ can be obtained from
$W_L$ by attaching cells which kill higher homotopy groups. Thus, there
is a natural inclusion 
\begin{gather*}
  \rho_L\colon  W_L \hookrightarrow K(\pi_1(W_L),1).
\end{gather*}

\subsection{Construction of a homology class}
\label{sec:constr-homol-class}

Now, consider two framed links $L$ and $L'$ in $M$, and suppose that
there exists a homeomorphism $h\colon  M_L\to M_{L'}$ relative to the
boundary.  Moreover, we assume that there exist isomorphisms
$f_k\colon \pi_1(W_{L};p_1,p_k) \to \pi_1(W_{L'};p_1,p_k)$ such that the
diagram
\begin{equation}
 \divide\dgARROWLENGTH by2
\begin{diagram} \label{diagram:delta-enhanced}
\node{\pi_1(M_{L};p_1^L,p_k^L)}       
\arrow[2]{e,t}{h_k}\arrow{s,l}{i'_k}
\node[2]{\pi_1(M_{L'};p_1^{L'},p_k^{L'})}
\arrow{s,r}{i'_k}                   \\
\node{\pi_1(W_{L};p_1,p_k)}       
\arrow[2]{e,t}{f_k}
\node[2]{\pi_1(W_{L'};p_1,p_k)}    \\
\node[2]{\pi_1(M;p_1,p_k)}
\arrow{nw,b}{i_k}\arrow{ne,r}{i_k}
\end{diagram}
\end{equation}
commutes for $k=1,\ldots, n$.  For $k=2,\ldots,n$, that ``$f_k$ is an
isomorphism'' means that $f_k$ is a bijection.  (Here, if $f_k$ is a
bijection which makes the above diagram commutes, then it follows that
$f_k$ is an isomorphism between the $\pi _1(W_L)$-set
$\pi_1(W_L;p_1,p_k)$ and the $\pi _1(W_{L'})$-set
$\pi_1(W_{L'};p_1,p_k)$ along the group isomorphism
$f_1\colon\pi_1(W_L)\to\pi _1(W_{L'})$.)

In the following, we define a homology class
\begin{gather*}
  \rho_*([W])\in H_4(\pi_1(W_L),\Z),
\end{gather*}
by constructing a closed $4$-manifold $W$ and a map $\rho\colon
W\rightarrow K(\pi_1(W_L) ,1)$.  

As in \cite{Garoufalidis-Kricker:2002},
define a $4$-manifold $W$  by
\begin{gather*}
  W:=W_L\cup_\partial (-W_{L'}),  
\end{gather*}
where we glue $W_L$ and $-W_{L'}$ (the orientation reversal of
$W_{L'}$) along the boundaries using the
identity map on $M\cup(\partial M \times I)$ and the homeomorphism
$h\colon M_L\overset{\cong}{\rightarrow} M_{L'}$.

Consider the following diagram
\begin{gather}
  \label{e1}
  \xymatrix{
    \partial W_L \ar[r]^{u'}\ar[d]_{u} & W_{L'} \ar[d]_{j'}\ar[rdd]^{\tilde{\rho}_{L'}}& \\
    W_L\ar[r]^{j}\ar[rrd]_{\rho_L}&W\ar[dr]^{\rho }&\\
    &&K(\pi_1(W_L),1),
  }
\end{gather}
where $u,u',j,j'$ are inclusions.    The map $\tilde\rho_{L'}\colon W_{L'}\to
K(\pi_1(W_L),1)$ is the composite
\begin{gather*}
  W_{L'}\overset{\rho _{L'}}{\rightarrow} K(\pi_1(W_{L'}),1)\overset{K(f_1^{-1},1)}{\underset{\simeq}{\rightarrow}}
  K(\pi_1(W_L),1).
\end{gather*}
Here $K(f_1^{-1},1)$ is a homotopy equivalence, unique up to homotopy.
By the definition of $W$, the square is a pushout.  Hence, to prove
existence of $\rho $ such that $\rho j=\rho _L$ and $\rho
j'=\tilde{\rho}_{L'}$, we need only to show that 
$\rho _L
u\simeq\tilde\rho_{L'}u'$, 
which easily follows from Lemma \ref{r1} below.  (Proof of this lemma is the
place where commutativity of \eqref{diagram:delta-enhanced} in
Theorem~\ref{thm:FR-generalization} is necessary not only for $k=1$
but also for $k=2,\ldots,n$.)

\begin{lem}
  \label{r1}
  Under the above situation, the following diagram commutes.
\begin{equation}\label{diagram:boundary-commutes}
\begin{diagram}
\node{\pi_1(\partial W_L)}    
\arrow{e,t}{u'_\ast}\arrow{s,l}{u_\ast}
\node{\pi_1(W_{L'})}
\arrow{s,r}{j'_\ast}                 \\
\node{\pi_1(W_{L})} 
\arrow{ne,l} {f_1}
\arrow{e,t}{j_\ast}
\node{\pi_1(W)} 
\end{diagram}
\end{equation}
\end{lem}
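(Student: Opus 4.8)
The plan is to check commutativity of the two triangles making up diagram \eqref{diagram:boundary-commutes} separately: the outer square $j'_\ast u'_\ast = j_\ast u_\ast$, and the upper triangle $j'_\ast u'_\ast = j_\ast f_1$ (equivalently $j_\ast u_\ast = j_\ast f_1$). The square $j'_\ast u'_\ast = j_\ast u_\ast$ is automatic: it is simply the statement that the pushout square in \eqref{e1} commutes on the level of $\pi_1$, since $j\circ u = j'\circ u'$ as maps $\partial W_L\to W$ by construction of $W$. So the real content is the upper triangle, and for this I would use the fact that the whole construction is based at the point $p_1\in\partial M\subset\partial W_L$, which lies in $M\cup(\partial M\times I)$, the part of the boundary glued by the identity.

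First I would describe $\pi_1(\partial W_L)$ concretely. Since $\partial M\neq\emptyset$, the boundary $\partial W_L\cong M\cup_{\partial M}(\partial M\times I)\cup_{\partial M_L}M_L$ is connected, and by van Kampen it is an amalgamated product (or an iterated one, using all the components $F_k$) of $\pi_1(M)$ and $\pi_1(M_L)$ — more precisely, a quotient of the free product $\pi_1(M)\ast\pi_1(M_L)$ in which, for each $k$, the images of a based loop in $F_k$ (transported to $p_1$ by a fixed path) agree. The key point is that every element of $\pi_1(\partial W_L)$ is a product of elements in the images of $u_{M\ast}\colon\pi_1(M)\to\pi_1(\partial W_L)$ and $u_{M_L\ast}\colon\pi_1(M_L)\to\pi_1(\partial W_L)$, together with the path-conjugation data coming from the arcs $\gamma_k$ used to move base points; this is exactly the setting in which the maps $i_k$ and $i'_k$ of diagram \eqref{diagram:delta-enhanced} live.

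Next I would verify the identity $j_\ast u_\ast = j_\ast f_1$ on each of these generating pieces. On the image of $\pi_1(M)$: here $u$ restricted to $M$ followed by $j$ is just the inclusion $M\hookrightarrow W_L\hookrightarrow W$, which factors as $M\hookrightarrow W_{L'}\hookrightarrow W$ too (again because $W$ is glued by the identity on $M$), and the commutativity of the lower triangles of \eqref{diagram:delta-enhanced} (the maps $i_k$) together with the definition of $f_k$ gives $f_1\circ i_{1} = i_{1}$ on $\pi_1(M)$, hence $j_\ast u_\ast$ and $j_\ast f_1$ agree there. On the image of $\pi_1(M_L)$: the inclusion $M_L\hookrightarrow\partial W_L\hookrightarrow W_L$ is, by definition, $i'$; composing with $j$ into $W$, and using that $W$ glues $M_L$ to $M_{L'}$ via $h$, we get the inclusion of $M_{L'}$ into $W$ precomposed with $h$; the commutativity of the top squares of \eqref{diagram:delta-enhanced} says exactly $f_k\circ i'_k = i'_k\circ h_k$, which is what is needed to push this class around the triangle. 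For the base-point-change data along $\gamma_k$ one uses that $f_k$ is an isomorphism of $\pi_1$-sets along $f_1$ (as noted in the paragraph preceding the lemma), so conjugations are respected. Since the two homomorphisms $j_\ast u_\ast$ and $j_\ast f_1$ out of $\pi_1(\partial W_L)$ (the latter precomposed with $u_\ast$, using the square) agree on a generating set, they are equal, which gives the upper triangle and completes the proof.

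I expect the main obstacle to be bookkeeping the base points correctly: $\pi_1(\partial W_L)$ is naturally a groupoid-flavored object because of the multiple boundary components $F_1,\dots,F_n$, and one must consistently transport everything to the single base point $p_1$ using the arcs $\gamma_k$, making sure that the isomorphisms $f_k$ for $k\geq 2$ — which are only bijections of $\pi_1$-sets a priori — interact correctly with this transport. This is precisely the point flagged in the parenthetical remark after diagram \eqref{e1} ("commutativity of \eqref{diagram:delta-enhanced} is necessary not only for $k=1$ but also for $k=2,\ldots,n$"), so the argument must genuinely invoke all the $f_k$, not just $f_1$.
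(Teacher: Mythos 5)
Your proposal follows essentially the same route as the paper's proof: the square commutes because $W$ is a pushout, and the triangle $u'_*=f_1u_*$ is verified on a generating set of $\pi_1(\partial W_L)$ consisting of the images of $\pi_1(M)$ and $\pi_1(M_L)$ together with the base-point-transport loops through the arcs $\gamma_k$, using commutativity of \eqref{diagram:delta-enhanced} for $k=1$ on the first two pieces and for $k=2,\ldots,n$ on the third. Two small points to tidy up: several of your displayed composites (e.g.\ $j_\ast f_1$) do not type-check as written, and the remaining triangle $j_\ast=j'_\ast\circ f_1$, which is also part of the diagram, should be deduced explicitly from $u'_*=f_1u_*$ together with the surjectivity of $u_\ast$ and the commutativity of the square, as the paper does in one line.
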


\begin{proof}
  Since $u_*$ is surjective and the square is commutative,
  $u'_*=f_1u_*$ implies $j_*=j'_*f_1$.

  Let us prove $u'_*=f_1u_*$.
  For $k=2,\ldots,n$, choose an arc $c_k$ in $M$ from $p_1$ to $p_k$ disjoint from $L$. Set 
$$d_k=(c_k\times\{0,1\}) \cup (\partial c_k\times I),$$ 
which is a loop in $\partial W_L$ based at $p_1$.
 The fundamental group $\pi_1(\partial W_L)$ is then generated by the elements $d_2,\ldots, d_n$ and the images of the maps $i_\ast\colon \pi_1(M)\to \pi_1(\partial W_L)$ and $i'_\ast\colon \pi_1(M_L)\to \pi_1(\partial W_L)$. Hence $u_\ast'=f_1 u_\ast$ is reduced to the following:
\begin{itemize}
 \item[(a)] $u_\ast' i_\ast=f_1 u_\ast i_\ast\colon \pi_1(M)\to \pi_1(W_{L'})$,
 \item[(b)] $u_\ast' i'_\ast=f_1 u_\ast i'_\ast\colon \pi_1(M_L)\to \pi_1(W_{L'})$,
 \item[(c)] $u_\ast' (d_k)=f_1 u_\ast(d_k)$ for $k=2,\dots,n$.
\end{itemize}
(a) (resp. (b)) follows from commutativity of the lower (resp. upper)
part of diagram~\eqref{diagram:delta-enhanced} for $k=1$.  (c) follows
from commutativity of diagram~\eqref{diagram:delta-enhanced} for
$k=2,\ldots,n$.
\end{proof}

\subsection{Statement of the theorem}
\label{sec:statement-theorem}

Now we can state our generalization of Theorem \ref{FR} to $3$-manifolds
with boundary.

\begin{thm}\label{thm:FR-generalization} 
  Let $M$ be a compact, connected, oriented $3$-manifold with $n > 0$
  boundary components, and let $L,L'\subset M$ be framed links.
  Then the following conditions are equivalent.
  \begin{enumerate}
  \item $L$ and $L'$ are related by a sequence of stabilizations
    and handle-slides.
  \item There exist a homeomorphism $h\colon  M_{L}\to M_{L'}$ relative to
    the boundary and isomorphisms $f_k\colon \pi_1(W_{L};p_1,p_k)\rightarrow
    \pi_1(W_{L'};p_1,p_k)$ for $k=1,\ldots,n$ such that diagram
    \eqref{diagram:delta-enhanced} commutes for $k=1,\ldots,n$ and
    $\rho _*([W])=0 \in H_4(\pi_1(W_L))$.
  \end{enumerate}
\end{thm}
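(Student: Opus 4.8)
The proof naturally splits into the two implications $(1)\Rightarrow(2)$ and $(2)\Rightarrow(1)$, and the overall strategy is to mimic Fenn and Rourke's original argument \cite{FennRourke:1978}, keeping careful track of the extra bookkeeping forced by the several boundary components. For $(1)\Rightarrow(2)$, I would proceed by induction on the length of a sequence of stabilizations and handle-slides relating $L$ to $L'$, so that it suffices to treat the case where $L'$ is obtained from $L$ by a single move. In either case surgery yields the same $3$-manifold up to a homeomorphism relative to the boundary, so $h$ is easy to produce; the point is to exhibit the isomorphisms $f_k$ making \eqref{diagram:delta-enhanced} commute and to check $\rho_*([W])=0$. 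For a stabilization, $W$ is obtained by connect-summing $W_L$ with a copy of $\pm\mathbb{CP}^2$ (or its reversal), and one has $\pi_1(W_L)\cong\pi_1(W_{L'})$ with the obvious identifications; for a handle-slide, $W_L$ and $W_{L'}$ are diffeomorphic rel boundary, and $W$ is then a boundary connected double which bounds, so $\rho_*([W])=0$. In both sub-cases the diagram commutes essentially by naturality of the inclusions, the only subtlety being to choose the base-point paths $\gamma_k$ compatibly; this is routine but must be spelled out for all $k$.

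The substantive direction is $(2)\Rightarrow(1)$. Here the plan is: starting from $h$, $f_k$, and the $4$-manifold $W=W_L\cup_\partial(-W_{L'})$ with $\rho\colon W\to K(\pi_1(W_L),1)$, use the hypothesis $\rho_*([W])=0\in H_4(\pi_1(W_L),\Z)$ together with standard surgery-theoretic arguments to replace $W$ by a simpler cobordism. First I would use $\rho_*([W])=0$ to find, after stabilizing $W$ with copies of $S^2\times S^2$ and $\pm\mathbb{CP}^2$ (which correspond on the link level to stabilizations), a relative handle decomposition of $W$ (built on $W_L$, or on $M\times I$) containing only $2$-handles; this is where one invokes that the map $\rho$ extends over a $K(\pi,1)$ and kills the obstruction to cancelling $1$- and $3$-handles. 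The translation between such a handle decomposition and a sequence of handle-slides relating the attaching links is then the classical Kirby-calculus argument: a movie of the cobordism $W_L$-side to $W_{L'}$-side built from $2$-handles gives exactly stabilizations and handle-slides. Throughout, the commuting diagram \eqref{diagram:delta-enhanced} for all $k$ is what guarantees that the identifications of fundamental groups (and of the path-sets $\pi_1(W;p_1,p_k)$) used at each stage are the ones induced geometrically, so that the handle moves can actually be performed in $M$ rel $\partial M$ rather than merely abstractly.

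I expect the main obstacle to be precisely the multi-boundary-component bookkeeping in the $(2)\Rightarrow(1)$ direction: in the closed case one works with $\pi_1$ alone, but here one must simultaneously control the $\pi_1(W_L)$-sets $\pi_1(W_L;p_1,p_k)$ for $k=2,\dots,n$, and it is the failure of \cite{Garoufalidis-Kricker:2002} to do this that the parenthetical remark in the introduction is warning about. Concretely, when one pushes handles around or slides components across regions near different boundary components, one needs the arcs $c_k$, the loops $d_k$, and the maps $i_k,i'_k$ to remain compatible, and Lemma \ref{r1} (which already required commutativity of \eqref{diagram:delta-enhanced} for all $k$) is the local model for this compatibility. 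A secondary technical point is making the handle-trading argument work \emph{relative to} $\partial M\times I$, i.e.\ ensuring all the modifications of $W$ fix the collar $\partial M\times I\subset\partial W$; this is standard but needs the cobordism to be a product there, which it is by construction. Once these points are handled, the remaining steps are the by-now-classical translation between handle calculus and Kirby moves, which I would cite from \cite{FennRourke:1978} and \cite{Roberts:1997} rather than redo in detail.
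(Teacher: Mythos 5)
Your overall architecture matches the paper's: the $(1)\Rightarrow(2)$ direction is handled move-by-move essentially as you describe, and for $(2)\Rightarrow(1)$ the paper likewise follows Fenn--Rourke, controls the multi-boundary bookkeeping through the maps $f_k$ and Lemma \ref{r1}, and finishes with Cerf's theorem on relative handle decompositions together with the classical translation of $4$-dimensional handle moves into stabilizations and handle-slides. Your identification of the $k\ge 2$ bookkeeping as the essential new difficulty (and as the reason the statement in \cite{Garoufalidis-Kricker:2002} fails for several boundary components) is also correct.

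However, there is a genuine gap at the one place where $\rho_*([W])=0$ actually enters. You propose to use this hypothesis ``to find \dots\ a relative handle decomposition of $W$ \dots\ containing only $2$-handles,'' invoking it to ``kill the obstruction to cancelling $1$- and $3$-handles.'' This is not how the condition operates, and the step as written would fail: $W=W_L\cup_\partial(-W_{L'})$ is a \emph{closed} $4$-manifold, and $\rho_*([W])$ is not an obstruction internal to any handle decomposition of $W$; it is a bordism-theoretic obstruction over $K(\pi_1(W_L),1)$. The correct mechanism (Lemma \ref{lem:FR-Lemma9}, i.e.\ Fenn--Rourke's Lemma 9) is: $\rho_*([W])=0$ if and only if the connected sum of $W$ with copies of $\pm\C P^2$ bounds an oriented $5$-manifold $\Omega$ into which $\pi_1(W_L)$ and $\pi_1(W_{L'})$ include as compatible split injections. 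One then does surgery on circles in the \emph{five}-manifold $\Omega$ to arrange $\pi_1(\Omega)\cong\pi_1(W_L)$, modifies $L$ and $L'$ by stabilizations and handle-slides so that $\Omega$ becomes a trivial (product) cobordism between $W_{\tilde{L}}$ and $W_{\tilde{L'}}$, and only then obtains a single $4$-manifold with two relative handle decompositions, to which Cerf's theorem applies. Without producing and surgering this $5$-manifold, there is no route from the homological hypothesis to the statement that $W_{\tilde{L}}$ and $W_{\tilde{L'}}$ are diffeomorphic rel $M$, which is precisely the input your ``classical Kirby-calculus'' endgame requires.
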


\begin{rem}
  Theorem \ref{FR} can be derived from the case $\partial M=S^2$ of Theorem
 \ref{thm:FR-generalization}.
\end{rem}

\begin{rem}
  In a paper in preparation \cite{HW}, we will give an example in which
  a nonzero homology class $\rho_*([W])$ is realized.
\end{rem}

\subsection{Proof of the theorem}
We need the following lemma which gives a necessary and sufficient
condition for $\rho _*([W]) \in H_4(\pi_1(W_L))$
to vanish.

\begin{lem}[{\cite[Lemma 9]{FennRourke:1978}, \cite[Lemma
	2.1]{Garoufalidis-Kricker:2002}}]
\label{lem:FR-Lemma9} 
In the situation of Theorem \ref{thm:FR-generalization}, 
we have  $\rho_*([W])=0$ if and only if
 the connected sum of $W$ with some copies of $\pm\C P^2$ is the
boundary of an oriented $5$-manifold $\Omega$ in such a way that the
diagram
\begin{equation}\label{diagram:FR-Lemma9}
\divide\dgARROWLENGTH by2
\begin{diagram}
\node{\pi_1(W_L;p_1)}
\arrow{se,b}{j_\ast}
\arrow[2]{e,t}{f_1}
\node[2]{\pi_1(W_{L'};p_1)}
\arrow{sw,r}{j_\ast '}\\
\node[2]{\pi_1(\Omega;p_1)}
\end{diagram}
\end{equation}
commutes and $j_\ast, j_\ast '$ are split injections induced by the inclusions $j\colon W_L \hookrightarrow \Omega$ and $j'\colon W_{L'}\hookrightarrow \Omega$.
\end{lem}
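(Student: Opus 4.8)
The statement to be proved is Lemma~\ref{lem:FR-Lemma9}: $\rho_*([W])=0$ in $H_4(\pi_1(W_L))$ if and only if, after connect-summing $W$ with copies of $\pm\C P^2$, the result bounds an oriented $5$-manifold $\Omega$ over which diagram~\eqref{diagram:FR-Lemma9} commutes with $j_*,j_*'$ split injections. This is a cobordism-theoretic reformulation of the vanishing of a class in group homology, and the natural tool is the Atiyah--Hirzebruch spectral sequence for oriented bordism of the space $K(\pi_1(W_L),1)=:K$, which I will abbreviate $\pi:=\pi_1(W_L)$.

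\medskip

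\noindent\emph{Plan.} First I would recall that $\rho\colon W\to K$ represents a class $[W,\rho]\in\Omega_4^{SO}(K)$, and that $\rho_*([W])$ is exactly the image of $[W,\rho]$ under the edge homomorphism $\Omega_4^{SO}(K)\to H_4(K;\Omega_0^{SO})=H_4(K;\Z)=H_4(\pi;\Z)$ coming from the Atiyah--Hirzebruch spectral sequence $E^2_{p,q}=H_p(K;\Omega_q^{SO})\Rightarrow\Omega_{p+q}^{SO}(K)$. Using $\Omega_0^{SO}=\Z$, $\Omega_1^{SO}=\Omega_2^{SO}=\Omega_3^{SO}=0$, and $\Omega_4^{SO}=\Z$ generated by $[\C P^2]$, the only filtration quotients contributing to $\Omega_4^{SO}(K)$ in total degree $4$ are $H_4(K;\Z)$ (the bottom, i.e. image of the edge map) and $H_0(K;\Omega_4^{SO})=\Z$ (the top, detected by signature); the differentials into and out of these vanish for dimension reasons in this low range (the relevant $d_2$, $d_3$ land in $E_{p,q}$ with $q\in\{1,2,3\}$, all zero). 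Hence the edge homomorphism $\Omega_4^{SO}(K)\to H_4(\pi;\Z)$ has kernel precisely the subgroup generated by $[\C P^2,\mathrm{const}]$ together with classes from $\Omega_4^{SO}(\mathrm{pt})$ and the image of $\Omega_4^{SO}(\mathrm{pt})\to\Omega_4^{SO}(K)$. Consequently $\rho_*([W])=0$ iff $[W,\rho]$ lies in this subgroup, i.e. iff for a suitable sign and number $m$ the class $[W\#\,m(\pm\C P^2),\rho']=0$ in $\Omega_4^{SO}(K)$; the $\C P^2$ summands are precisely what is needed to kill the signature obstruction in $H_0(K;\Z)$, since $W$ itself may have nonzero signature.

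\medskip

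\noindent\emph{Translating ``bordant to zero'' into the $5$-manifold $\Omega$.} By definition of oriented bordism of a space, $[W\#\,m(\pm\C P^2),\rho']=0\in\Omega_4^{SO}(K)$ means exactly that there is a compact oriented $5$-manifold $\Omega$ with $\partial\Omega=W\#\,m(\pm\C P^2)$ and a map $R\colon\Omega\to K$ restricting to $\rho'$ on the boundary. Now I would recover the diagram~\eqref{diagram:FR-Lemma9}: restricting $R$ to the submanifolds $W_L, W_{L'}\subset\partial\Omega\subset\Omega$ and using that $\rho|_{W_L}=\rho_L$ and $\rho|_{W_{L'}}=\tilde\rho_{L'}$ (from diagram~\eqref{e1}), together with the fact that $\rho_L,\rho_{L'}$ induce the natural isomorphisms on $\pi_1$, one gets on $\pi_1$ a commuting triangle $\pi_1(W_L)\to\pi_1(\Omega)\leftarrow\pi_1(W_{L'})$ which is identified with~\eqref{diagram:FR-Lemma9} via $f_1$. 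Conversely, given such an $\Omega$ and the commuting diagram, the classifying map of $\pi_1(\Omega)$ composed with the inclusion-induced map, and the standard fact that $\pi_1(\Omega)\to\pi$ is split (which is where the split-injectivity hypothesis on $j_*, j_*'$ enters), produces a map $\Omega\to K$ realizing the null-bordism, hence $[W\#\,m(\pm\C P^2),\rho']=0$ and so $\rho_*([W])=0$. The split-injectivity is used to promote a map inducing the right homomorphism on $\pi_1$ into an actual map to $K(\pi,1)$: one first maps $\Omega\to K(\pi_1(\Omega),1)$ and then uses the retraction $\pi_1(\Omega)\to\pi$ to land in $K(\pi,1)$ compatibly on the boundary.

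\medskip

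\noindent\emph{Main obstacle.} The routine half is the spectral-sequence bookkeeping; the delicate point is the bookkeeping of basepoints and of the identification of the $\pi_1$-triangle with~\eqref{diagram:FR-Lemma9} under $f_1$, and in particular verifying that the split-injectivity of $j_*$ and $j_*'$ is both necessary (so that $\Omega$ carries enough of $\pi$ to build the map to $K(\pi,1)$) and available in the bordism we produce. Here I expect to invoke Lemma~\ref{r1} to know that on $\pi_1$ the two inclusions $W_L\hookrightarrow W\hookrightarrow\Omega$ and $W_{L'}\hookrightarrow W\hookrightarrow\Omega$ fit into the asserted commuting triangle, and then the argument is essentially that of \cite[Lemma~9]{FennRourke:1978}, adapted verbatim to the boundary case since all the extra structure (the arcs $\gamma_k$, the maps $f_k$ for $k\ge2$) has already been absorbed into the construction of $W$ and $\rho$ in Section~\ref{sec:constr-homol-class}. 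So in the end I would simply cite Fenn--Rourke's argument for the closed case, noting that the only input needed beyond it is Lemma~\ref{r1}, which we have proved in the present generality.
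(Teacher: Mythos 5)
The paper does not prove this lemma at all: it is quoted verbatim from \cite[Lemma 9]{FennRourke:1978} and \cite[Lemma 2.1]{Garoufalidis-Kricker:2002}, and those references use exactly the argument you outline (the Atiyah--Hirzebruch spectral sequence for $\Omega_*^{SO}(K(\pi,1))$ in degree $4$, with the $\pm\C P^2$ summands accounting for $E^\infty_{0,4}$ and the split injections/commuting triangle encoding that the null-bordism carries a map to $K(\pi_1(W_L),1)$ compatible with $\rho$). Your sketch is correct and is essentially the intended proof, so no comparison with the paper is possible beyond noting that the only genuinely new ingredient needed in the boundary setting is Lemma \ref{r1}, which you correctly identify.
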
 

\begin{proof}[Proof of Theorem \ref{thm:FR-generalization}]

The proof that (1) implies (2) is almost the same as the proof of Theorem
\ref{FR} given in \cite{FennRourke:1978}.  It follows from the ``if'' part
of Lemma \ref{lem:FR-Lemma9} and the fact that handle-slides and
stabilizations on a framed link $L$ preserve the homeomorphism class
of $M_L$ and the $\pi_1(W_L;p_1,p_k)$, $k=1,\ldots,n$.

Now we prove that (2) implies (1).
Assume that all the algebraic conditions are satisfied.
By Lemma~\ref{lem:FR-Lemma9}, we may assume,  after some
stabilizations, that 
 $W=\partial \Omega$, where $\Omega$ is a $5$-manifold such that
diagram~\eqref{diagram:FR-Lemma9} commutes and $j_\ast$ and $j'_\ast$
are split injections. Now we alter $\Omega$, as in the original proof
in \cite{FennRourke:1978}, by doing surgery on $\Omega$ until we have
$\pi_1(\Omega)\cong \pi_1(W_L)$. Then we modify $L$ and $L'$ to
$\tilde{L}$ and $\tilde{L'}$ by some specific 
stabilizations and handle-slides until we obtain a trivial cobordism $\Omega'$ joining
$W_{\tilde{L}}$ and $W_{\tilde{L'}}$. Thus $W_{\tilde{L}}$ and
$W_{\tilde{L'}}$ are two different relative handle decompositions of
the same manifold.

By a famous theorem of J. Cerf \cite{Cerf:stratification} any two relative handle decomposition of the same manifold are connected by a sequence of handle slides, creating/ annihilating canceling handle pairs and isotopies. (For a reference see \cite[Theorem 4.2.12]{Gompf-Stipsicz}.)
Note that Cerf's theorem applies in the case when $W_L$ has two boundary components, as well as in the case where the boundary of the $4$-manifold is connected.
Fenn and Rourke have shown in \cite{FennRourke:1978} that these handle
slides (1-handle slides and 2-handle slides) and creating or
annihilating canceling handle pairs can be achieved by modifying the
links using  stabilization and handle-slides.
Hence the proof is complete.  
\end{proof}

\section{Null-homotopic framed links}\label{section: application1}

In this section we apply Theorem \ref{thm:FR-generalization} to
null-homotopic framed links.

Let $M$ be a compact, connected, oriented $3$-manifold with $n>0$
boundary components as before.  We use the notations given in
Section \ref{sec:gener-fenn-rourke-1}.

A framed link $L$ in $M$ is said to be {\em null-homotopic} if each
component of $L$ is null-homotopic in $M$.  In this case, the map
$$i_k\colon\pi_1(M;p_1,p_k)\to\pi_1(W_L;p_1,p_k)$$ is bijective for
$k=1,\dots,n$.  Define
$$e_k\colon\pi_1(M_L;p_1^L,p_k^L)\to\pi_1(M;p_1,p_k)$$ to be the composition
\begin{gather*}
  e_k\colon\pi_1(M_L;p_1^L,p_k^L)\overset{i'_k}{\to}
  \pi_1(W_L;p_1,p_k)\overset{i_k^{-1}}{\underset{\cong}{\to}}\pi_1(M;p_1,p_k),
\end{gather*}
which is surjective.

\begin{thm}
  \label{r4}
  Let $M$ be a compact, connected, oriented $3$-manifold with $n > 0$
  boundary components, and let $L,L'\subset M$ be null-homotopic framed links.  Then
  the following conditions are equivalent.
  \begin{enumerate}
  \item $L$ and $L'$ are related by a sequence of 
    stabilizations and handle-slides.
  \item There exists a homeomorphism $h\colon  M_{L}\to M_{L'}$ relative to
    the boundary such that the following diagram commutes for
    $k=1,\ldots,n$.
    \begin{gather} \label{diagram:r4}
      \xymatrix{
	\pi _1(M_L;p_1^L,p_k^L)\ar[rr]^{h_k}\ar[dr]_{e_k} &
	&
	\pi_1(M_{L'};p_1^{L'},p_k^{L'})\ar[ld]^{e'_k}\\
	&
	\pi_1(M;p_1,p_k)&
      }
    \end{gather}
  \end{enumerate}
\end{thm}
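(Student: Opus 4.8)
The plan is to derive Theorem~\ref{r4} as a special case of Theorem~\ref{thm:FR-generalization}, so the bulk of the work is bookkeeping: translating condition (2) of Theorem~\ref{r4} into condition (2) of Theorem~\ref{thm:FR-generalization} and checking that the homological obstruction $\rho_*([W])$ vanishes automatically in the null-homotopic case. First I would observe that since each component of $L$ is null-homotopic in $M$, the kernel of $i_k\colon\pi_1(M;p_1,p_k)\to\pi_1(W_L;p_1,p_k)$ (which is normally generated by the homotopy classes of the components of $L$) is trivial, so $i_k$ is a bijection; the same holds for $L'$. Thus the isomorphisms $f_k\colon\pi_1(W_L;p_1,p_k)\to\pi_1(W_{L'};p_1,p_k)$ required in Theorem~\ref{thm:FR-generalization} are \emph{forced}: we must set $f_k=i'_k\circ(i_k)^{-1}$ (the maps denoted $i'_k$ on the $M$ side), and the point is that with this definition, commutativity of diagram~\eqref{diagram:delta-enhanced} is equivalent to commutativity of diagram~\eqref{diagram:r4}. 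Concretely, the lower triangle of \eqref{diagram:delta-enhanced} commutes by the very definition of $f_k$, and the upper square commutes if and only if $i'_k\circ h_k = f_k\circ i'_k$ on the $M_L$ side, which after composing with $i_k^{-1}$ reads $e'_k\circ h_k = e_k$, i.e.\ the triangle~\eqref{diagram:r4}. One should check that $f_k$ so defined is genuinely an isomorphism of $\pi_1$-sets along $f_1$; this is immediate from $f_1=i'_1\circ i_1^{-1}$ being a group isomorphism and naturality.

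Next I would handle the homological condition. By the remark in the introduction, $\pi_1(W_L)\cong\pi_1(M)$ via $i_1$ (which is a bijection in the null-homotopic case), and $\pi_1(M)$ is the fundamental group of a compact, connected, oriented $3$-manifold, hence by Lemma~\ref{r3} we have $H_4(\pi_1(M),\Z)=0$. Therefore $H_4(\pi_1(W_L),\Z)=0$, and the homology class $\rho_*([W])$, living in this group, is automatically zero. This means condition~(2) of Theorem~\ref{thm:FR-generalization} reduces exactly to: there is a boundary-relative homeomorphism $h\colon M_L\to M_{L'}$ making \eqref{diagram:r4} commute — which is precisely condition~(2) of Theorem~\ref{r4}. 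With this, the equivalence $(1)\Leftrightarrow(2)$ of Theorem~\ref{r4} follows from the equivalence $(1)\Leftrightarrow(2)$ of Theorem~\ref{thm:FR-generalization}: for the forward direction, stabilizations and handle-slides preserve the homeomorphism type of $M_L$ and the relevant $\pi_1$-data (as already noted in the proof of Theorem~\ref{thm:FR-generalization}), and in particular they preserve null-homotopicity of the link, so $h$ and the commuting triangle persist; for the converse, one feeds $h$ and the forced $f_k$ into Theorem~\ref{thm:FR-generalization}.

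The step I expect to require the most care is the precise matching of diagrams — in particular keeping straight the two different maps written $i'_k$ (the one from $\pi_1(M_L;p_1^L,p_k^L)$ into $\pi_1(W_L;p_1,p_k)$, versus, on the $M$ side, the map $i_k$), and verifying that $e_k$ as defined just before the theorem statement is exactly $i_k^{-1}\circ i'_k$ so that the chase ``post-compose \eqref{diagram:delta-enhanced} with $i_k^{-1}$'' legitimately produces \eqref{diagram:r4}. I would also want to double-check the claim that null-homotopicity is preserved under handle-slides: a handle-slide replaces $L_i$ by a band-sum with a parallel copy of $L_j$, and the homotopy class of the new component is a product of conjugates of the (trivial) classes of $L_i$ and $L_j$, hence still trivial; this is needed to even apply Theorem~\ref{r4}'s hypotheses along the sequence of moves, though strictly for the logical equivalence it suffices to invoke Theorem~\ref{thm:FR-generalization} directly. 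No genuinely new ideas beyond Lemma~\ref{r3} and a diagram chase are needed.
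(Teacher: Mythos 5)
Your proposal is correct and follows exactly the route the paper intends: the paper's own "proof" is the one-line remark that Theorem~\ref{r4} follows from Theorem~\ref{thm:FR-generalization} together with Lemma~\ref{r3}, and your write-up supplies precisely the missing details (the $f_k$ are forced by bijectivity of $i_k$, commutativity of \eqref{diagram:delta-enhanced} reduces to \eqref{diagram:r4}, and $\rho_*([W])=0$ automatically since $H_4(\pi_1(W_L),\Z)\cong H_4(\pi_1(M),\Z)=0$). The only quibble is a purely notational one you already flag yourself: the map you call $i'_k$ "on the $M$ side" is the paper's $i_k$ for $W_{L'}$, not its $i'_k$.
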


\begin{rem}
  \label{r5}
  For a closed $3$-manifold $M$, the variant of Theorem \ref{r4} is
  implicitly obtained in \cite{FennRourke:1978}.  Two null-homotopic
  framed links $L$ and $L'$ in a closed, connected, oriented
  $3$-manifold $M$ are related by a sequence of stabilizations and
  handle-slides if and only if there is a homeomorphism $h\colon
  M_{L}\to M_{L'}$ such that the diagram
    \begin{gather} \label{e5}
      \xymatrix{
	\pi _1(M_L)\ar[rr]^{h_*}\ar[dr]_{e} &
	&
	\pi_1(M_{L'})\ar[ld]^{e'}\\
	&
	\pi_1(M)&
      }
    \end{gather}
    commutes.  Here $e$ and $e'$ are defined similarly as before.
\end{rem}

Theorem \ref{r4} follows easily from Theorem \ref{thm:FR-generalization}
and the following lemma, which seems to be well known.  In fact, it
seems implicit in Fenn and Rourke \cite{FennRourke:1978}, p. 8,
ll. 8--9, where it reads ``For many other groups, $\eta(\Delta)$
vanishes, e.g. the fundamental group of any $3$-manifold.''  We give a
sketch of proof of this fact since we have not been able to find a
suitable reference.

\begin{lem}
  \label{r3}
  If $M$ is a compact, connected, oriented $3$-manifold, then we have
  $H_4(\pi_1M,\Z)=0$.
\end{lem}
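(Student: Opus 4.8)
The plan is to reduce the statement to a computation of group homology via an aspherical model for $M$, handling the finitely many exceptional cases separately. First I would invoke the prime and geometric decomposition: by the Kneser–Milnor decomposition and the geometrization theorem, $\pi_1(M)$ is a free product of factors, each of which is either $\Z$ (coming from an $S^2\times S^1$ summand), a finite group (coming from a spherical space form summand), or the fundamental group of an aspherical compact $3$-manifold (possibly with boundary). Since $H_4(-,\Z)$ of a free product is the direct sum of the $H_4$ of the factors for $*\ge 1$ (by the Mayer–Vietoris sequence for a free product, or the fact that a free product of groups has a $K(\pi,1)$ obtained by wedging the individual ones), it suffices to show $H_4(G,\Z)=0$ for each factor group $G$.

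Next I would dispose of the two easy kinds of factors. For $G=\Z$ we have $H_4(\Z,\Z)=H_4(S^1,\Z)=0$. For $G$ finite, $G$ is the fundamental group of a spherical space form, so $G$ acts freely on $S^3$ and hence has periodic cohomology of period dividing $4$; in particular $\widehat{H}^*(G,\Z)$ is $4$-periodic, so $H_4(G,\Z)\cong \widehat H_3(G,\Z)\cong \widehat H^{-4}(G,\Z)\cong\widehat H^0(G,\Z)=0$ — more simply, $H_n(G,\Z)=0$ for $n$ even and positive when $G$ has period dividing $4$. (Alternatively, since $G$ is finite, $H_n(G,\Z)$ is finite for $n\ge1$, and one can compute directly.) The remaining case is $G=\pi_1(N)$ for $N$ a compact, connected, oriented, aspherical $3$-manifold. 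Here $N$ itself is a $K(G,1)$, so $H_*(G,\Z)=H_*(N,\Z)$, and $H_4(N,\Z)=0$ for dimension reasons: $N$ is a $3$-dimensional CW complex (or manifold), so it has no homology above degree $3$.

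The one subtlety I would be careful about is whether every prime factor really falls into one of these three buckets — in particular, whether a closed prime $3$-manifold with infinite fundamental group is necessarily aspherical. This is exactly the content of the sphere theorem together with geometrization: a closed orientable prime $3$-manifold is either $S^2\times S^1$, or has finite $\pi_1$ (spherical), or is irreducible with infinite $\pi_1$ and hence aspherical (its universal cover is an open, simply connected, irreducible $3$-manifold, which is contractible). For $M$ with boundary, each prime factor with boundary is either a ball (contributing trivial group) or has aspherical interior, so the same trichotomy applies. So the main obstacle, such as it is, is bookkeeping: correctly citing Kneser–Milnor, geometrization/the Poincaré conjecture, and the behaviour of $H_4$ under free products, rather than any hard new computation. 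Since the paper only asks for a sketch, I would state the decomposition, note the three cases, give the one-line reason in each, and cite standard references (e.g. Hempel's book, or Aschenbrenner–Friedl–Wilton) for the topological input.

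\begin{proof}[Sketch of proof]
  By the Kneser--Milnor prime decomposition and geometrization,
  $\pi_1(M)$ is a free product $G_1*\cdots*G_m$ in which each $G_i$ is
  either infinite cyclic, a finite group, or the fundamental group of a
  compact, connected, oriented, aspherical $3$-manifold.  Since a free
  product of groups has a $K(\pi,1)$ obtained by wedging together
  $K(G_i,1)$'s, we have $H_4(\pi_1M,\Z)\cong\bigoplus_i H_4(G_i,\Z)$,
  so it suffices to treat each factor.  If $G_i\cong\Z$ then
  $H_4(G_i,\Z)=H_4(S^1,\Z)=0$.  If $G_i$ is finite, then it is the
  fundamental group of a spherical space form, hence acts freely on
  $S^3$, hence has cohomology of period dividing $4$; in particular
  $H_4(G_i,\Z)=0$.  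Finally, if $G_i=\pi_1(N)$ with $N$ a compact,
  connected, oriented, aspherical $3$-manifold, then $N$ is a
  $K(G_i,1)$ of dimension $3$, so $H_4(G_i,\Z)=H_4(N,\Z)=0$.  Summing
  over $i$ gives $H_4(\pi_1M,\Z)=0$.
\end{proof}
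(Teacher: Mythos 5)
Your proof is correct and follows essentially the same route as the paper's: prime decomposition plus the free-product splitting of $H_4$, the aspherical case via $M\simeq K(\pi_1M,1)$, and the finite case via $4$-periodicity of the cohomology coming from the free action on $S^3$ (the paper phrases this last step as $H_4\cong H^5\cong H^1=0$ using the universal coefficient theorem, but that is the same periodicity argument).
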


\begin{proof}
  Consider a connected sum decomposition $M\cong M_1\sharp\dots\sharp
  M_k$, $k\ge0$, where each $M_i$ is prime.  Since
  $\pi_1M\cong\pi_1M_1*\dots*\pi_1M_k$, we have $H_4(\pi_1M,\Z)\cong
  H_4(\pi_1M_1,\Z)\oplus\dots\oplus H_4(\pi_1M_k,\Z)$.  Thus, we may assume
  without loss of generality that $M$ is prime.  If
  $M=S^2\times S^1$, then we have $H_4(\pi_1M,\Z)=H_4(\mathbb Z,\Z)=0$.
  Hence we may assume that $M$ is irreducible.

  If $\pi_1M$ is infinite, then $M$ is a $K(\pi_1M,1)$ space. Hence
  \begin{gather*}
    H_4(\pi_1M,\Z)\cong H_4(M,\Z)=0.  
  \end{gather*}

  Suppose that $\pi_1M$ is finite.  If $\partial M\neq\emptyset$, then
  we have $M\cong B^3$ and clearly $H_4(\pi_1M,\Z)=0$. Thus we may
  assume that $M$ is closed.  Then the universal cover of $M$ is a
  homotopy $3$-sphere, which is $S^3$ by the Poincar\'e conjecture
  established by Perelman.  By Lemma 6.2 of \cite{Adem-Milgram}, we
  have
  \begin{gather}
    \label{e2}
    H^5(\pi_1M,\Z)\cong H^1(\pi_1M,\Z).
  \end{gather}
  Recall that, for any finite group $G$,  $H_n(G,\Z)$ is finite for
  all $n\ge1$.  This fact and the universal coefficient theorem imply 
  \begin{gather}
    \label{e4}
    H^1(\pi_1M,\Z)\cong\operatorname{Hom}(H_1(\pi_1M,\Z),\Z)=0  ,\\
    \label{e3}
    H^5(\pi_1M,\Z)\cong \operatorname{Hom}(H_5(\pi_1M,\Z),\Z)\oplus\operatorname{Ext}(H_4(\pi_1M,\Z),\Z)
    \cong H_4(\pi_1M,\Z),
  \end{gather}
  where the last $\cong$ follows since $H_4(\pi_1M,\Z)$ is finite.
  Now, \eqref{e2}, \eqref{e4} and \eqref{e3} imply that $H_4(\pi_1M,\Z)=0$.
\end{proof}

\section{$\pi_1$-admissible framed links} \label{section:admissible}
In this section we consider $\pi_1$-admissible framed links and give a refinement of Theorem~\ref{r4}. We also consider $\pi_1$-admissible framed links in cylinders over surfaces.

\subsection{$\pi_1$-admissible framed links in $3$-manifolds}
\label{sec:pi1-admiss-fram-1}
Let $M$ be a compact, connected, oriented $3$-manifold.
Let us call a framed link $L$ in $M$ {\em
$\pi_1$-admissible} if
\begin{itemize}
\item $L$ is null-homotopic, and
\item the linking matrix of $L$ is diagonal with diagonal entries
  $\pm1$, or, in other words, $L$ is algebraically split and
  $\pm1$-framed.
\end{itemize}

Surgery along $\pi_1$-admissible framed links has been studied by
Cochran, Gerges and Orr \cite{Cochran-Gerges-Orr}.  (They considered
there mainly more general framed links.)
In \cite{Cochran-Gerges-Orr} it is proved that, for all $d\ge1$, we
have $\pi_1(M_L)/\Gamma_d\pi_1(M_L)\cong\pi_1(M)/\Gamma_d\pi_1(M)$,
where for a group $G$, $\Gamma_dG$ denotes the $d$th lower central
series subgroup of $G$ defined by $\Gamma_1G=G$ and
$\Gamma_{d}G=[G,\Gamma_{d-1}G]$ for $d\ge 2$.  In this sense, surgery
along a $\pi_1$-admissible framed link $L$ in a $3$-manifold $M$ gives
a $3$-manifold $M_L$ whose fundamental group is very close to that of
$M$.

Surgery along $\pi_1$-admissible framed links was also studied by the
first author \cite{Habiro:2006}.  To state the result from
\cite{Habiro:2006} that we use in this section, we introduce
``band-slides'' and ``Hoste moves'', which are two special kinds of
moves on $\pi_1$-admissible framed links.

A {\em band-slide} is a pair of algebraically cancelling pair of
handle-slides of one component 
over another, see Figure \ref{F05}.
A band-slide on a $\pi_1$-admissible
framed link produces a $\pi_1$-admissible framed link.

A {\em Hoste move} is depicted in Figure \ref{FHoste}.  Let
$L=L_1\cup\cdots\cup L_l$ be a $\pi_1$-admissible framed link in $M$,
with an unknotted component 
$L_i$ with framing $\epsilon=\pm1$.  Since $L$ is $\pi_1$-admissible,
the linking number of $L_i$ and each component of $L':=L\setminus L_i$ is zero.  Let
$L'_{L_i}$ denote the framed link obtained from $L'$ by surgery along $L_i$,
which is regarded as a framed link in $M\cong M_{L_i}$.  The link
$L'_{L_i}$ is 
again $\pi_1$-admissible.  Then the framed links $L$ and $L'_{L_i}$ are
said to be related by a Hoste move.

\begin{figure}[t]
    \begin{center}\input{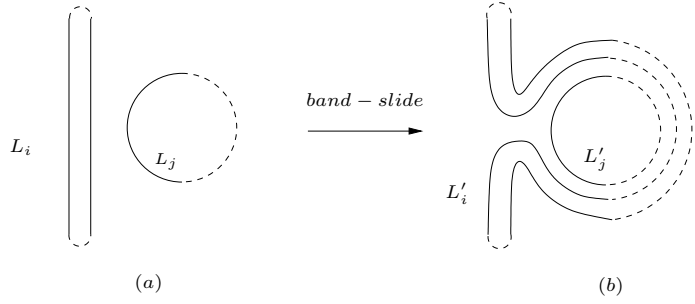}\end{center}
    \caption{(a) Two components $L_i$ and $L_j$ of a framed link.  (b) The result of a band-slide of $L_i$ over $L_j$.}
    \label{F05}
  \end{figure}

\begin{figure}[t]
    \begin{center}\input{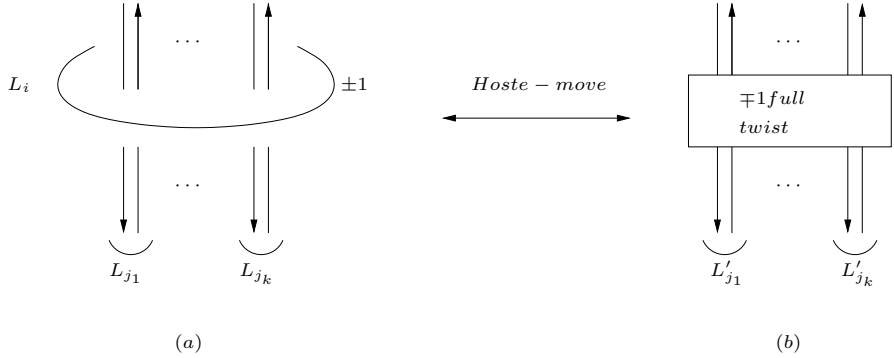}\end{center}
    \caption{(a) The component $L_i$ of $L$ is unknotted and of framing
  $\pm 1$.  (b) The result $L'_{L_i}$ of a Hoste move on  $L_i$.}
    \label{FHoste}
  \end{figure} 

\begin{prop}[{\cite[Proposition 6.1]{Habiro:2006}}]
  \label{prop:6}
  For two $\pi_1$-admissible framed links $L$ and $L'$ in a connected, oriented $3$-manifold $M$, the
  following conditions are equivalent.
  \begin{enumerate}
  \item \label{r11.1} $L$ and $L'$ are related by a sequence of stabilizations and handle-slides.
  \item \label{r11.2} $L$ and $L'$ are related by a sequence of stabilizations and band-slides.
  \item \label{r11.3} $L$ and $L'$ are related by a sequence of Hoste moves.
  \end{enumerate}
\end{prop}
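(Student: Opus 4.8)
The plan is to prove the cycle of implications $(\ref{r11.3})\Rightarrow(\ref{r11.2})\Rightarrow(\ref{r11.1})\Rightarrow(\ref{r11.3})$, since $(\ref{r11.2})\Rightarrow(\ref{r11.1})$ is essentially free: a band-slide is by definition a pair of handle-slides, and a stabilization is a stabilization, so a sequence of stabilizations and band-slides is in particular a sequence of stabilizations and handle-slides. The real content is therefore in the other two implications, and the overall strategy is to reduce everything to the combinatorial/topological bookkeeping already set up in Theorem~\ref{r4} together with the effect of surgery along a single $\pm1$-framed unknot.

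For $(\ref{r11.3})\Rightarrow(\ref{r11.2})$ I would analyze a single Hoste move. If $L=L_1\cup\cdots\cup L_l$ is $\pi_1$-admissible with $L_i$ an unknotted $\epsilon$-framed component, then surgery along $L_i$ is a self-homeomorphism of $M$ (since an $\epsilon$-framed unknot surgery on $M$ gives back $M$: the unknot bounds a disk, and the surgery amounts to a $\mp1$-twist along that disk, i.e.\ a connected sum with $S^3$). Under this homeomorphism $L'=L\setminus L_i$ is carried to $L'_{L_i}$. I would then realize this twist concretely as a sequence of handle-slides of the strands passing through the disk bounded by $L_i$, over $L_i$: each strand gets slid over $L_i$ once in each algebraic direction so that the net linking stays zero — that is, these come in algebraically cancelling pairs, hence are band-slides — followed by deleting $L_i$, which is now an isolated $\epsilon$-framed unknot, by a stabilization. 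The key point to check is that since $\mathrm{lk}(L_i,L_j)=0$ for all $j$, the strands of each $L_j$ piercing the disk can be grouped into cancelling pairs, so the twist is genuinely a product of band-slides and not merely handle-slides. This is the place where $\pi_1$-admissibility (the algebraically split hypothesis) is used in an essential way.

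For $(\ref{r11.1})\Rightarrow(\ref{r11.3})$ I would invoke Theorem~\ref{r4}: if $L$ and $L'$ are $\pi_1$-admissible (hence null-homotopic) and related by stabilizations and handle-slides, then there is a homeomorphism $h\colon M_L\to M_{L'}$ rel boundary making the diagrams \eqref{diagram:r4} commute, and conversely any such data is realized by stabilizations and handle-slides. So it suffices to show that the Hoste-move equivalence class of a $\pi_1$-admissible link is determined by exactly this data, i.e.\ that two $\pi_1$-admissible links with the same $(h, \{e_k\})$-data are Hoste-move equivalent. Using the already-established implication $(\ref{r11.3})\Rightarrow(\ref{r11.1})$ (via $(\ref{r11.2})$) in one direction, what remains is: given $L, L'$ related by a single stabilization or a single handle-slide, produce a sequence of Hoste moves. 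A stabilization is directly a Hoste move on an isolated $\pm1$-framed unknot (with $L'$ empty). For a handle-slide, the standard trick is to introduce an auxiliary $\pm1$-framed unknot by a stabilization, perform the handle-slide as a band-slide using the cancelling partner, and re-express the resulting band-slides via Hoste moves; here I would cite the mechanism of \cite{Habiro:2006} directly rather than reconstruct it.

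The main obstacle I expect is the $(\ref{r11.3})\Rightarrow(\ref{r11.2})$ step, specifically making the "twist = product of band-slides" argument fully rigorous: one must keep careful track of framings and of the pairing of oppositely-oriented strands through the twisting disk, and verify that the bookkeeping is consistent with the $\pm1$ framing on $L_i$ and with the claim that the resulting moves are \emph{algebraically cancelling} pairs. Since this proposition is quoted from \cite[Proposition 6.1]{Habiro:2006}, the intended proof is presumably just a reference to that paper, with at most a brief indication of the above reductions; I would structure the write-up accordingly, proving the easy implications in one line and pointing to \cite{Habiro:2006} for the delicate twist-as-band-slides computation.
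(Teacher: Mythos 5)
The paper offers no proof of Proposition \ref{prop:6}: it is imported wholesale from \cite[Proposition 6.1]{Habiro:2006}, so deferring the substance to that reference is consistent with what the authors do. Your implications (\ref{r11.2})$\Rightarrow$(\ref{r11.1}) and (\ref{r11.3})$\Rightarrow$(\ref{r11.2}) are sound: the blow-down of an unknotted $\pm1$-framed component $L_i$ is realized by sliding the strands through its spanning disk over $L_i$, the vanishing of the linking numbers pairs these slides into algebraically cancelling couples (band-slides), and the now isolated component is removed by a stabilization.

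However, your sketch of (\ref{r11.1})$\Rightarrow$(\ref{r11.3}) has a genuine gap, and it sits exactly where the real difficulty of the proposition lives. You propose to reduce to the case where $L$ and $L'$ differ by a single stabilization or a single handle-slide. That reduction fails: a single handle-slide applied to a $\pi_1$-admissible link generically destroys the algebraically split condition (the linking matrix becomes $E(O_n\oplus I_{p,q})E^t$ for an elementary matrix $E$, which is usually not diagonal), so the intermediate links in a $\delta$-equivalence between two admissible links need not be admissible, and Hoste moves and band-slides are not even defined on them; one cannot induct move by move. The actual content of \cite[Proposition 6.1]{Habiro:2006} is the rearrangement of an arbitrary sequence of handle-slides between admissible links into one passing only through admissible links, which requires factoring the associated matrix in the isotropy group $H_{n,p,q}$ into generators each realizable by admissible moves --- precisely the mechanism the authors themselves sketch later in the proof of Proposition \ref{r2}. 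Your appeal to Theorem \ref{r4} does not supply this (its condition (2) characterizes $\delta$-equivalence, not Hoste-move equivalence, so it gives no traction on (\ref{r11.1})$\Rightarrow$(\ref{r11.3})), and in any case Theorem \ref{r4} requires $M$ compact with nonempty boundary, whereas Proposition \ref{prop:6} is stated for an arbitrary connected, oriented $3$-manifold. You have also located the ``delicate'' step in the wrong place: the twist-as-band-slides computation is the easy part, while the matrix factorization is the part for which the citation is doing all the work.
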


Theorem \ref{r4} and Proposition \ref{prop:6} immediately imply the following result.

\begin{thm}
  \label{prop:admissible}
  Let $M$ be a compact, connected, oriented $3$-manifold with $n > 0$
  boundary components, and let $L,L'\subset M$ be $\pi_1$-admissible, framed links.  Then
  the following conditions are equivalent.
  \begin{enumerate}
  \item
    $L$ and $L'$ are related by a sequence of stabilizations and
    band-slides.
  \item $L$ and $L'$ are related by a sequence of Hoste moves.
  \item 
    There exists a homeomorphism $h\colon  M_{L}\to M_{L'}$ relative to the
    boundary such that the following diagram commutes for
    $k=1,\ldots,n$.
    \begin{gather} \label{e8}
      \xymatrix{
	\pi _1(M_L;p_1^L,p_k^L)\ar[rr]^{h_k}\ar[dr]_{e_k} &
	&
	\pi_1(M_{L'};p_1^{L'},p_k^{L'})\ar[ld]^{e'_k}\\
	&
	\pi_1(M;p_1,p_k)&
      }
    \end{gather}
  \end{enumerate}
\end{thm}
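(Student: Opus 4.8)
The plan is to obtain Theorem~\ref{prop:admissible} as a formal consequence of Theorem~\ref{r4} together with Proposition~\ref{prop:6}, with essentially no new geometric input. The logical skeleton is a cycle of implications (1) $\Rightarrow$ (2) $\Rightarrow$ (3) $\Rightarrow$ (1) among the three listed conditions. Note first that every $\pi_1$-admissible framed link is in particular null-homotopic, so Theorem~\ref{r4} applies verbatim to both $L$ and $L'$: conditions ``$L$ and $L'$ are related by a sequence of stabilizations and handle-slides'' and condition (3) above (the existence of a boundary-relative homeomorphism $h$ making diagram~\eqref{e8} commute for all $k$) are equivalent. I would state this explicitly as the hinge of the argument.

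First I would prove (1) $\Rightarrow$ (3). Assume $L$ and $L'$ are related by stabilizations and band-slides. Since a band-slide is by definition a pair of handle-slides (Figure~\ref{F05}), the sequence relating $L$ and $L'$ is in particular a sequence of stabilizations and handle-slides. By Theorem~\ref{r4} (the implication (1) $\Rightarrow$ (2) there), there exists a boundary-relative homeomorphism $h\colon M_L\to M_{L'}$ making diagram~\eqref{diagram:r4}, which is literally diagram~\eqref{e8}, commute for $k=1,\dots,n$. This gives (3). Next, (2) $\Rightarrow$ (1) is immediate from Proposition~\ref{prop:6}: by that proposition, conditions \eqref{r11.2} and \eqref{r11.3} are equivalent, so if $L$ and $L'$ are related by Hoste moves they are related by stabilizations and band-slides. (One should remark that Proposition~\ref{prop:6} is stated for an arbitrary connected, oriented $3$-manifold, hence in particular covers the $M$ with nonempty boundary considered here.)

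Finally I would prove (3) $\Rightarrow$ (1), which closes the cycle. Suppose there is a boundary-relative homeomorphism $h$ making \eqref{e8} commute for all $k$. By Theorem~\ref{r4} (the implication (2) $\Rightarrow$ (1) there), $L$ and $L'$ are related by a sequence of stabilizations and handle-slides. Now invoke Proposition~\ref{prop:6}, the equivalence \eqref{r11.1} $\Leftrightarrow$ \eqref{r11.2}: since $L$ and $L'$ are $\pi_1$-admissible and related by stabilizations and handle-slides, they are related by a sequence of stabilizations and band-slides. This is condition (1), completing the cycle (1) $\Rightarrow$ (3) $\Rightarrow$ (1), and together with (1) $\Leftrightarrow$ (2) from Proposition~\ref{prop:6} all three conditions are equivalent.

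There is no genuine obstacle here; the only point requiring a word of care is the compatibility of hypotheses across the cited results. Theorem~\ref{r4} is stated for $3$-manifolds with $n>0$ boundary components and for null-homotopic links, and $\pi_1$-admissibility implies null-homotopy, so its hypotheses are met. Proposition~\ref{prop:6} is stated for $\pi_1$-admissible links in an arbitrary connected oriented $3$-manifold (no boundary restriction), so it applies directly. The one thing worth verifying explicitly is that the intermediate links produced when Theorem~\ref{r4} converts the homeomorphism data into a sequence of stabilizations and handle-slides need not themselves be $\pi_1$-admissible — but this is harmless, because Proposition~\ref{prop:6} is then applied only to the endpoints $L$ and $L'$, which are $\pi_1$-admissible by assumption, to replace the entire handle-slide sequence by a band-slide sequence. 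So the proof is a short diagram-chase-free deduction, and I would present it in exactly the three-implication form above.
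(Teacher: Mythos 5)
Your proposal is correct and is exactly the paper's argument: the authors state that Theorem~\ref{r4} and Proposition~\ref{prop:6} ``immediately imply'' the result, and your three-implication chain simply spells out that deduction. The care you take about applying Proposition~\ref{prop:6} only to the $\pi_1$-admissible endpoints $L$ and $L'$ is a reasonable point to make explicit, but it introduces nothing beyond what the paper intends.
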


\subsection{$\pi_1$-admissible framed links in cylinders over
  surfaces}
\label{sec:pi1-admiss-fram}

In this subsection, we consider the special cases of Theorem
\ref{prop:admissible} where $M=\Sigma_{g,n}\times I$ is the cylinder
over a surface $\Sigma _{g,n}$ of genus $g\ge 0$ with $n\ge 0$
boundary components.  In this case, the condition (3) in Theorem
\ref{prop:admissible} can be weakened.

Let $L$ be a $\pi_1$-admissible framed link in the cylinder
$M=\Sigma_{g,n}\times I$.  By \cite[Theorem 6.1]{Cochran-Gerges-Orr}, there are
natural isomorphisms between nilpotent quotients
\begin{gather}
  \label{e6}
  \pi_1M_L/\Gamma_d\pi_1M_L\cong\pi_1M/\Gamma_d\pi_1M\cong \pi_1\Sigma_{g,n}/\Gamma _d\pi_1\Sigma_{g,n}.
\end{gather}
for all $d\ge1$.  

\subsubsection{Surfaces with nonempty boundary}
\label{sec:surf-with-bound-1}

Consider the case $n\ge 1$.  Note that $\partial
M=\partial(\Sigma_{g,n}\times I)$ is connected.

\begin{prop} \label{thm:nullhomotopic-1boundary-admissible}
Let $L$ and $L'$ be two $\pi_1$-admissible, framed links in
$M=\Sigma_{g,n} \times I$ with $n>0$.  Then the following conditions
are equivalent.
\begin{enumerate}
\item $L$ and $L'$ are related by a sequence of stabilizations and band-slides.
\item There exists a homeomorphism $h \colon  M_{L} \to M_{L'}$ relative to
the boundary.
\end{enumerate}
\end{prop}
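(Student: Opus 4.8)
The plan is to deduce this proposition from Theorem~\ref{prop:admissible} by showing that, for $\pi_1$-admissible framed links in a cylinder $M=\Sigma_{g,n}\times I$ over a surface with nonempty boundary, the mere existence of a boundary-preserving homeomorphism $h\colon M_L\to M_{L'}$ automatically forces the triangle \eqref{e8} to commute, possibly after adjusting $h$ by an isotopy. Thus the implication $(1)\Rightarrow(2)$ is immediate from Theorem~\ref{prop:admissible}, and the whole content is the implication $(2)\Rightarrow(1)$: upgrading a bare homeomorphism to one compatible with the maps $e_k,e'_k$ down to $\pi_1(M;p_1,p_k)$.

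First I would record the algebraic input: by \cite[Theorem~6.1]{Cochran-Gerges-Orr} (equation \eqref{e6}), for each $d\ge1$ the surjections $e_k$ induce isomorphisms on the $d$th nilpotent quotients of the relevant fundamental groupoid sets, and since $\pi_1\Sigma_{g,n}$ is free (as $n\ge1$), it is residually nilpotent, so $e_1\colon\pi_1(M_L)\to\pi_1(M)$ and $e'_1\colon\pi_1(M_{L'})\to\pi_1(M)$ become isomorphisms modulo every term of the lower central series; combined with the fact that $M_L$ is itself a compact oriented $3$-manifold with $\pi_1$ having the same nilpotent quotients as a free group, one argues that $e_1$ and $e'_1$ are in fact isomorphisms. (Concretely: a surjection of groups inducing isomorphisms on all nilpotent quotients, whose source is the fundamental group of a compact $3$-manifold with free-like homology, has trivial kernel — the kernel is perfect and, being normally finitely generated and residually the intersection gives nothing, it vanishes; this is exactly the situation in \cite{Cochran-Gerges-Orr}.) Granting that $e_1,e'_1$ are isomorphisms, the triangle \eqref{e8} for $k=1$ says precisely that $h_*\colon\pi_1(M_L)\to\pi_1(M_{L'})$ equals $(e'_1)^{-1}e_1$, i.e.\ $h_*$ is determined; the issue is whether an \emph{arbitrary} boundary-relative homeomorphism $h$ satisfies this.

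Here is where I expect the main obstacle, and the idea to overcome it. A given $h$ need not a priori satisfy $e'_1 h_*=e_1$; we must replace $h$ by $h'=\phi\circ h$ where $\phi\colon M_{L'}\to M_{L'}$ is a self-homeomorphism, relative to the boundary, realizing the automorphism $\alpha:=e_1^{-1}\,?$ — more precisely realizing the discrepancy automorphism $\beta:=(e'_1)^{-1}\,e_1\,h_*^{-1}\in\operatorname{Aut}(\pi_1 M_{L'})$. For this I would invoke that $M_{L'}$, being surgery on a $\pi_1$-admissible link in a handlebody-type cylinder, is again sufficiently close to $\Sigma_{g,n}\times I$ that its mapping class group surjects onto $\operatorname{Aut}(\pi_1)$ in the required range — alternatively, and more robustly, observe that $\beta$ is the identity on $\pi_1 M$ after composing with $e'_1$, so $\beta$ is ``inner-like'' relative to the boundary and can be realized by a homeomorphism supported away from $\partial M$, e.g.\ a composition of point-pushing maps and twists along spheres/tori coming from the surgery solid tori. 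The same argument must be run simultaneously for all $k=2,\dots,n$; but since $\partial M$ is connected and we are working relative to the boundary, the base-point paths and the sets $\pi_1(M_{L'};p_1^{L'},p_k^{L'})$ are all torsors over $\pi_1(M_{L'})$ in a compatible way, so commutativity for $k=1$ propagates to all $k$ once the base arcs are chosen consistently — this is the same bookkeeping already used in Lemma~\ref{r1}. The hard part, then, is precisely the realization step: showing the discrepancy automorphism of $\pi_1(M_{L'})$ is induced by a boundary-relative self-homeomorphism; I would handle it by factoring $\beta$ through the isomorphisms \eqref{e6} into an automorphism of $\pi_1(\Sigma_{g,n})$ fixing the peripheral structure, realizing that on $\Sigma_{g,n}$ (free group case: automorphisms fixing the boundary word are geometric, by Nielsen/Dehn–Nielsen–Baer for surfaces with boundary), crossing with $I$, and transporting back through the surgery description. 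Once $h$ is thus normalized, condition (3) of Theorem~\ref{prop:admissible} holds and we conclude (1).
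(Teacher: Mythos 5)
Your reduction of everything to checking commutativity of \eqref{e8} via Theorem~\ref{prop:admissible} is the right starting point, but the way you then try to establish that commutativity contains a genuine error that derails the rest. You assert that $e_1$ and $e_1'$ are isomorphisms because they induce isomorphisms on all nilpotent quotients and $\pi_1(M)$ is free. This is false in general: what \eqref{e6} gives is that $\ker e_1\subset\bigcap_{d\ge1}\Gamma_d\pi_1(M_L)$, and this intersection need not vanish because $\pi_1(M_L)$ (unlike the free group $\pi_1(M)$) need not be residually nilpotent. Your parenthetical argument ("the kernel is perfect and, being normally finitely generated \dots it vanishes") is not a valid deduction; this is exactly the Stallings-type situation where one only controls nilpotent quotients, not the group itself. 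Having wrongly concluded that $h_*$ is \emph{determined} as $(e_1')^{-1}e_1$, you are forced into a "realization" step: modifying $h$ by a boundary-relative self-homeomorphism of $M_{L'}$ inducing a prescribed automorphism of $\pi_1(M_{L'})$. That step is both unjustified (realizing automorphisms of the fundamental group of the \emph{surgered} manifold $M_{L'}$, which is only a homology product, by homeomorphisms rel boundary is not available, and Dehn--Nielsen--Baer does not transport through the surgery) and, as it turns out, unnecessary. A smaller confusion: for $n\ge1$ the $3$-manifold $M=\Sigma_{g,n}\times I$ has \emph{connected} boundary, so only $k=1$ occurs in \eqref{e8}; there are no cases $k=2,\dots,n$ to propagate to.

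The argument that actually works requires no modification of $h$ and no injectivity of $e_1$. Given any boundary-relative $h$ and any $x\in\pi_1(M_L;p_1^L)$, compare $e_1(x)$ and $e_1'h_1(x)$ inside $\pi_1(M;p_1)$. For each $d\ge1$, pass to the quotient of \eqref{e9} by $\Gamma_d$: by \eqref{e6} all three maps become isomorphisms, and the quotient triangle commutes because $\pi_1(\partial M_L)$ surjects onto $\pi_1(M_L)/\Gamma_d$ (its composite with $e_1$ mod $\Gamma_d$ is the surjection $\pi_1(\partial M)\to\pi_1(M)/\Gamma_d$) while $h$ is the identity on $\partial M_L=\partial M_{L'}$. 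Hence $e_1(x)\equiv e_1'h_1(x)\pmod{\Gamma_d\pi_1(M;p_1)}$ for every $d$, and since $\pi_1(M;p_1)\cong\pi_1(\Sigma_{g,n})$ is free, $\bigcap_{d\ge1}\Gamma_d\pi_1(M;p_1)=\{1\}$, so $e_1(x)=e_1'h_1(x)$. The residual nilpotence you need is that of the \emph{target} $\pi_1(M)$, not of $\pi_1(M_L)$; once you see that, the realization machinery in your proposal can be discarded.
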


\begin{proof}
``(1)$\Rightarrow$(2)'' immediately follows from Theorem~\ref{prop:admissible}.

To prove ``(2)$\Rightarrow$(1)'', one has to show that the diagram
\eqref{e8} commutes for $k=1$, i.e., 
\begin{gather} \label{e9}
  \xymatrix{
    \pi _1(M_L;p_1^L)\ar[rr]^{h_1}\ar[dr]_{e_1} &
    &
    \pi_1(M_{L'};p_1^{L'})\ar[ld]^{e'_1}\\
    &
    \pi_1(M;p_1)&
  }
\end{gather}
commutes.  This can be checked by using the isomorphism \eqref{e6}.
Let $x\in\pi_1(M_L;p_1^L)$.  For $d\ge 1$, take the nilpotent quotient of the diagram \eqref{e9}
\begin{gather} \label{e10}
  \xymatrix{
    \pi _1(M_L;p_1^L)\ar[rr]^{h_1}_{\cong}/\Gamma _d\ar[dr]_{e_1}^{\cong} &
    &
    \pi_1(M_{L'};p_1^{L'})/\Gamma _d\ar[ld]^{e'_1}_{\cong}\\
    &
    \pi_1(M;p_1)/\Gamma _d&
  }
\end{gather}
where all arrows are isomorphisms.  Since the homeomorphism $h\colon
M_L\congto M_{L'}$ respects the
boundary, the diagram \eqref{e10} commutes.  Hence, for $x\in\pi_1(M_L;p_1^L)$ we have
\begin{gather}
  \label{e11}
  e_1(x)\equiv e'_1h_1(x)\pmod{\Gamma _d\pi_1(M;p_1)}.
\end{gather}
Since \eqref{e11} holds for all $d\ge 1$, and since we have $\bigcap_{d\ge
 1}\Gamma _d\pi_1(M;p_1)=\{1\}$, it follows that $e_1(x)=e'_1h_1(x)$.
Hence the diagram \eqref{e9} commutes.
\end{proof}

\subsubsection{Closed surfaces}
\label{sec:closed-surfaces}
Now, we consider the case $n=0$. In this case, the manifold
$M=\Sigma_{g,0}\times I$ has two boundary components.
Set $F_1=\Sigma_{g,0}\times\{0\}$ and $F_2=\Sigma_{g,0}\times\{1\}$.
Choose a base point $p$ of $\Sigma_{g,0}$ and set
$p_1=(p,0)\in F_1$ and $p_2=(p,1)\in F_2$.

\begin{prop} \label{thm:nullhomotopic-SigmaxI-admissible}
Let $L$ and $L'$ be two $\pi_1$-admissible, framed links in
$M=\Sigma_{g,0} \times I$.  Then the following conditions are
equivalent:
\begin{enumerate}
\item $L$ and $L'$ are related by a sequence of stabilizations and band-slides,
\item There exists a homeomorphism $h\colon  M_{L} \to M_{L'}$ relative to the boundary such that the following diagram commutes:
\begin{equation}
\divide\dgARROWLENGTH by2
\begin{diagram}\label{diagram:thm:nullhomotopic-SigmaxI-admissible}
\node{\pi_1(M_L;p_1^{L},p_2^L)} \arrow{se,b}{e_2 } \arrow[2]{e,t}{h_2}
\node[2]{\pi_1(M_{L'};p_1^{L'},p_2^{L'})} \arrow{sw,r}{e'_2 }\\
\node[2]{\pi_1(M;p_1,p_2)}
\end{diagram}
\end{equation}
\end{enumerate}
\end{prop}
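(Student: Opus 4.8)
The plan is to deduce both implications from Theorem~\ref{prop:admissible} applied to $M=\Sigma_{g,0}\times I$, which has $n=2$ boundary components $F_1,F_2$. The implication ``(1)$\Rightarrow$(2)'' will be immediate: by Theorem~\ref{prop:admissible}, condition~(1) produces a boundary-relative homeomorphism $h$ making diagram~\eqref{e8} commute for $k=1,2$, and the $k=2$ instance of \eqref{e8} is precisely diagram~\eqref{diagram:thm:nullhomotopic-SigmaxI-admissible}. So the whole content is ``(2)$\Rightarrow$(1)'': starting from $h$ for which \eqref{diagram:thm:nullhomotopic-SigmaxI-admissible} commutes, I must verify the remaining $k=1$ instance of \eqref{e8}, after which Theorem~\ref{prop:admissible} gives~(1). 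I expect that the $k=1$ diagram in fact commutes automatically for \emph{any} boundary-relative homeomorphism, by the same mechanism used in the proof of Proposition~\ref{thm:nullhomotopic-1boundary-admissible}; the point special to a cylinder over a closed surface is that the single boundary component $F_1$ already suffices to run that argument.

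First I would check that $e_1$, restricted to the image of the inclusion-induced map $\pi_1(F_1;p_1)\to\pi_1(M_L;p_1^L)$, coincides with the inclusion-induced isomorphism $\pi_1(F_1;p_1)\overset{\cong}{\to}\pi_1(M;p_1)$, and likewise for $e'_1$. Unwinding the definition $e_1=i_1^{-1}\circ i'_1$, this reduces to the following observation: inside $\partial W_L\cong M\cup_{\partial M}(\partial M\times I)\cup_{\partial M_L}M_L$ there is a product collar $F_1\times I$ containing the arc $\gamma_1=\{p_1\}\times I$, with $F_1\times\{0\}\subset M$ and $F_1\times\{1\}\subset M_L$; in this collar, a loop of $F_1$ based at $p_1^L$ conjugated by $\gamma_1$ is homotopic to the corresponding loop based at $p_1\in M$, so they become equal in $\pi_1(W_L;p_1)$. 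Next, since $h$ is the identity on $\partial M_L\supset F_1$ (and $F_1$ is untouched by the surgeries defining both $M_L$ and $M_{L'}$), the map $h_1$ carries the image of $\pi_1(F_1;p_1)$ in $\pi_1(M_L;p_1^L)$ onto the image of $\pi_1(F_1;p_1)$ in $\pi_1(M_{L'};p_1^{L'})$ compatibly with these identifications. Combining the two facts, $e_1$ and $e'_1\circ h_1$ agree on the image of $\pi_1(F_1;p_1)$.

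Then I would pass to nilpotent quotients, exactly as in the proof of Proposition~\ref{thm:nullhomotopic-1boundary-admissible}. Because $F_1=\Sigma_{g,0}\times\{0\}\hookrightarrow M$ is a homotopy equivalence and $e_1$ induces the isomorphism $\pi_1(M_L;p_1^L)/\Gamma_d\overset{\cong}{\to}\pi_1(M;p_1)/\Gamma_d$ of \eqref{e6}, the image of $\pi_1(F_1;p_1)$ generates $\pi_1(M_L;p_1^L)/\Gamma_d\pi_1(M_L;p_1^L)$ for every $d\ge1$. Two homomorphisms that agree on a generating set are equal, so $e_1\equiv e'_1\circ h_1\pmod{\Gamma_d\pi_1(M;p_1)}$ for all $d$; since $\pi_1(M;p_1)\cong\pi_1(\Sigma_{g,0})$ is residually nilpotent, i.e.\ $\bigcap_{d\ge1}\Gamma_d\pi_1(M;p_1)=\{1\}$, this forces $e_1=e'_1\circ h_1$. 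Hence \eqref{e8} commutes for $k=1$ as well, and Theorem~\ref{prop:admissible} (with $n=2$) yields~(1).

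The step I expect to be the main obstacle is the bookkeeping in the second paragraph: verifying, from the definition of $i'_1$ via the arc $\gamma_1$ and the standard identification of $\partial W_L$, that the contribution of $\pi_1(F_1)$ is literally the natural map into $\pi_1(M)$. Everything else is formal. It is worth stressing, by contrast with Proposition~\ref{thm:nullhomotopic-1boundary-admissible}, that the hypothesis on path sets in~(2) cannot be dropped when $n=0$: choosing a generic basepoint $p$ so that the vertical arc $\{p\}\times I$ misses both $L$ and $L'$, the homeomorphism $h$ may send this arc in $M_L$ to an arc in $M_{L'}$ that differs from $\{p\}\times I$ by an element of $\pi_1(M_{L'};p_1^{L'})$ not annihilated by $e'_1$ (the $k=1$ diagram being insensitive to this, since it only sees loops in $F_1$), and the commutativity of \eqref{diagram:thm:nullhomotopic-SigmaxI-admissible} is exactly the condition excluding this possibility.
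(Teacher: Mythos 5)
Your proposal is correct and follows essentially the same route as the paper: reduce to checking the $k=1$ instance of diagram \eqref{e8}, then verify it by passing to nilpotent quotients via \eqref{e6} and using $\bigcap_{d\ge1}\Gamma_d\pi_1(\Sigma_{g,0})=\{1\}$. Your second paragraph merely spells out the detail the paper compresses into ``since $h$ respects the boundary, the quotient diagram commutes,'' namely that $e_1$ and $e_1'h_1$ agree on the image of $\pi_1(F_1)$, which generates each nilpotent quotient.
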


\begin{proof}
  The proof is similar to that of Proposition
  \ref{thm:nullhomotopic-1boundary-admissible}.  In the proof of
  ``(2)$\Rightarrow$(1)'', one has to prove that the diagram
  \eqref{e8} commutes for $k=1$.  This can be done similarly using the
  fact that 
  \begin{gather*}
    \bigcap_{d\ge 1}\Gamma _d\pi_1(M;p_1)=\bigcap_{d\ge 1}\Gamma _d\pi_1(\Sigma _{g,0};p_1)=\{1\}.
  \end{gather*}
\end{proof}

For the cylinder over the torus $T^2=\Sigma_{1,0}$, we do not need
commutativity of
\eqref{diagram:thm:nullhomotopic-SigmaxI-admissible} in
Proposition \ref{thm:nullhomotopic-SigmaxI-admissible}.

\begin{prop}
 Let $L$ and $L'$ be two $\pi_1$-admissible, framed links in the cylinder $M=T^2 \times I$.
Then the following conditions are equivalent.
\begin{enumerate}
\item $L$ and $L'$ are related by a sequence of stabilizations and band-slides.
\item There exists a homeomorphism $h \colon  M_{L} \to M_{L'}$ relative to the boundary.
\end{enumerate}
\end{prop}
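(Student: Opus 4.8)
The plan is to reduce to Proposition~\ref{thm:nullhomotopic-SigmaxI-admissible}. Since ``$(1)\Rightarrow(2)$'' is immediate from it, only ``$(2)\Rightarrow(1)$'' needs an argument, and by Proposition~\ref{thm:nullhomotopic-SigmaxI-admissible} it is enough to prove the following: given \emph{any} homeomorphism $h\colon M_L\to M_{L'}$ relative to the boundary, one can modify $h$ to a homeomorphism $h'$, still relative to the boundary, for which diagram~\eqref{diagram:thm:nullhomotopic-SigmaxI-admissible} commutes. So the whole content is that the extra commutativity hypothesis of Proposition~\ref{thm:nullhomotopic-SigmaxI-admissible} is automatically achievable when the surface is the torus.

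First I would reduce the commutativity of \eqref{diagram:thm:nullhomotopic-SigmaxI-admissible} to a single equation. Pick a path $c$ from $p_1$ to $p_2$ in $M=T^2\times I$ missing $L\cup L'$; then $c$ also represents elements $c^L\in\pi_1(M_L;p_1^L,p_2^L)$, $c^{L'}\in\pi_1(M_{L'};p_1^{L'},p_2^{L'})$, and $e_2(c^L)=e_2'(c^{L'})=c$. Since $\pi_1(M)=\Z^2$ is abelian we have $\Gamma_2\pi_1(M)=\{1\}$, so the argument used for the $k=1$ case in the proof of Proposition~\ref{thm:nullhomotopic-SigmaxI-admissible} shows, with no further hypothesis on $h$, that diagram~\eqref{e9} commutes. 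Using the equivariance of $e_2,e_2',h_2$ over $e_1,e_1',h_1$ and writing an arbitrary element of $\pi_1(M_L;p_1^L,p_2^L)$ as $\ell\cdot c^L$ with $\ell\in\pi_1(M_L;p_1^L)$, one checks that \eqref{diagram:thm:nullhomotopic-SigmaxI-admissible} commutes if and only if $e_2'(h_2(c^L))=c$. Putting $\delta_h:=h(c^L)*\overline{c^{L'}}\in\pi_1(M_{L'};p_1^{L'})$, so that $h_2(c^L)=\delta_h\cdot c^{L'}$, this is equivalent to $e_1'(\delta_h)=1$, i.e. to $[\delta_h]=0$ in $H_1(M_{L'};\Z)$, because $e_1'$ factors through $H_1(M_{L'};\Z)\cong H_1(M;\Z)\cong\Z^2$, the first isomorphism by \cite{Cochran-Gerges-Orr}.

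Next I would build a correction homeomorphism. Near $F_1$ the manifold $M_{L'}$ agrees with $T^2\times I$, so it contains a collar $T^2\times[0,\varepsilon]$ on which the boundary identification is the standard one. For $w\in\Z^2$ let $\phi_w\colon M_{L'}\to M_{L'}$ be the identity outside this collar and $(x,t)\mapsto(x+g(t)w,t)$ on it, where $g\colon[0,\varepsilon]\to\mathbb R$ satisfies $g(0)=0$, $g(\varepsilon)=1$; this is a well-defined homeomorphism relative to $\partial M_{L'}$ precisely because translation by the integer vector $w$ is the identity of $T^2$. A short homotopy argument (homotope any path issuing from $p_1$, rel endpoints, to the concatenation of $c$ with a path in $T^2\times\{\varepsilon\}$, on which $\phi_w$ is the identity) shows that $\phi_w$ changes the class of such a path by the image of $w$ in $H_1(M_{L'};\Z)$, so $[\delta_{\phi_w\circ h}]=[\delta_h]+w$. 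Choosing $w$ to be the element of $\Z^2\cong H_1(M_{L'};\Z)$ corresponding to $-[\delta_h]$ and setting $h'=\phi_w\circ h$, we obtain a homeomorphism relative to the boundary with $[\delta_{h'}]=0$; hence \eqref{diagram:thm:nullhomotopic-SigmaxI-admissible} commutes for $h'$, and Proposition~\ref{thm:nullhomotopic-SigmaxI-admissible} yields $(1)$.

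The main obstacle is the third step, namely producing and analysing $\phi_w$: this is exactly where the torus is special, since for a closed surface of genus $\ge2$ the homeomorphism group is connected with trivial $\pi_1$ (Earle--Eells), so no such collar correction exists — which is why Proposition~\ref{thm:nullhomotopic-SigmaxI-admissible} genuinely needs its hypothesis on diagram~\eqref{diagram:thm:nullhomotopic-SigmaxI-admissible} in the higher-genus case. The first step is routine once one is careful to track the $\pi_1$-equivariance of the maps $e_k$.
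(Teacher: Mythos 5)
Your proof is correct and follows essentially the same route as the paper: reduce to Proposition~\ref{thm:nullhomotopic-SigmaxI-admissible} and then correct $h$ by composing with a meridian/longitude twist of a $T^2\times I$ collar (the paper glues an external cylinder along $F_2$ and twists there, which is the same construction up to isotopy). Your intermediate reduction of the commutativity of \eqref{diagram:thm:nullhomotopic-SigmaxI-admissible} to the vanishing of $[\delta_h]\in H_1(M_{L'};\Z)\cong\Z^2$ just makes explicit what the paper leaves implicit in the sentence ``Thus, we can find a self-homeomorphism $s$\dots''.
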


\begin{proof}
By Proposition  \ref{thm:nullhomotopic-SigmaxI-admissible} we just
have to show that if there exists a homeomorphism $h\colon M_{L} \to
M_{L'}$ relative to the boundary, then there exists a homeomorphism $h'
\colon M_{L} \to M_{L'}$ such that the diagram
\eqref{diagram:thm:nullhomotopic-SigmaxI-admissible}, with $h_2$
replaced by $h_2'$, commutes.

Consider the cylinder $T^2 \times I$. Fix one boundary component while
twisting the other once along the meridian (resp. the longitude) of
$T^2$. This defines a self-homeomorphism $\tau_m$ (resp. $ \tau_l$) on
$T^2 \times I$ relative to the boundary which maps $\{\ast\}\times I$,
$\ast \in T^2$, to a line with the same endpoints but which travels
once along the meridian (resp. the longitude). A sequence of $\tau_m$
and $\tau_l$ defines a self-homeomorphism $s$ on $T^2 \times I$ by
using the composition of maps. Any bijective map $b\colon
\pi_1(T^2\times I;p_1,p_2) \to \pi_1(T^2\times I;p_1,p_2)$ of
$\pi_1(T^2\times I)$-sets can be
induced by such a self-homeomorphism. Let $M'_{L'}=M_{L'} \cup_{T^2} (
T^2 \times I)$ be a homeomorphic copy of $M_{L'}$ obtained by gluing
together $M_{L'}$ and $T^2\times I$ along $F_2\cong T^2 \subset
M_{L'}$ and $T^2\times \{0\} $ using the identity map.  Any
self-homeomorphism $s$ on $T^2 \times I$ as defined above, extends to
a self-homeomorphism $\tilde{s}$ on $M'_{L'}$. Thus, we can find a
self-homeomorphism $s$ on $T^2 \times I$ such that the composition
$h'=\tilde{s}\circ h$ defines a commutative diagram
\eqref{diagram:thm:nullhomotopic-SigmaxI-admissible}.

\end{proof}

\begin{rem}\label{rem1}
  If $g>1$, then the above proof can not be extended to the closed
 surface $\Sigma_{g,0}$. In this case, every self-homeomorphism  of
 $\Sigma_{g,0}$ is homotopic to the
 identity. This can be seen as follows. Every diffeomorphism $g\in
 \operatorname{Diff}(\Sigma_{g,0} \times I)$ relative to the boundary is
 homotopic to a diffeomorphism $g'(x,t):=(g_t(x),t)$ with $g_t(x) \in
 \operatorname{Diff}(\Sigma_{g,0})$. Since $g$ is the identity on the
 boundaries we have $g_0(x)=g_1(x)=\id_{\Sigma_{g,0}}(x)$. Hence, $g_t$
 defines a loop in $\operatorname{Diff}(\Sigma_{g,0})$ and every $g_t$ is
 homotopic to $\id_{\Sigma_{g,0}}$. Thus, $g_t$ is a loop in the group
 $\operatorname{Diff}_0(\Sigma_{g,0})$ of diffeomorphisms of $\Sigma_{g,0}$
 homotopic to the identity.  By a theorem of Earle and Eells
 \cite{Earle:1967} the group $\operatorname{Diff}_0(\Sigma_{g,0})$ is
 contractible when $g>1$.  Hence, the loop formed by
 $g_t$ is homotopic to $\id_{\Sigma_{g,0}}$ and therefore $g$ is homotopic to
 $\id_{\Sigma_{g,0} \times I}$.
\end{rem}

\section{Example}
\label{sec:example}

\subsection{An example}
Let us call the equivalence relation on framed links generated by
stabilizations and handle-slides the {\em $\delta$-equivalence}.

The following example shows that commutativity of diagram
\eqref{diagram:delta-enhanced} for $k=2,\ldots ,n$ is necessary as
well as that for $k=1$.
	
  Let $V_1$ and $V_2$ be handlebodies of genus $2$ and $1$,
  respectively, embedded in $S^3$ in a trivial way, and set
  $M=S^3\setminus \int(V_1\cup V_2)$, $F_k=\partial V_k$ ($k=1,2$), see Figure
  \ref{F01}(a).  \begin{figure}[t]
    \begin{center}\input{F01.pstex_t}\end{center}
    \caption{}
    \label{F01}
  \end{figure} Let $\beta ,\beta '\subset M$ be two arcs from
  $p_1\in F_1$ to $p_2\in F_2$, and let $a,b$ and $c$ be loops based at $p_1$,
  as depicted. The fundamental group $\pi_1M$ is freely generated by $a,b,c\in \pi_1M$.

  Let $L=L_1\cup L_2$ be the framed link in $M$ as depicted in Figure
  \ref{F01} (a), where $L_1$ and $L_2$ are of framing $0$.
  The result $M_L$ of surgery along $L$ is obtained from $M$ by
  letting the two handles in $V_1$ and $V_2$ clasp each other.
  $\pi_1M_L$ has a presentation $\langle a,b,c\;|\; aca^{-1}c^{-1}=1\rangle $.

  Let $f\colon M\congto M$ be a homeomorphism relative to the boundary
  such that $f(\beta ')=\beta $.  The image $f(L)=L'=L'_1\cup L'_2$
  looks as depicted in Figure \ref{F01}(b).  Let $h\colon M_L\congto
  M_{L'}$ be the homeomorphism induced by $f$.  Note that
  $\pi_1W_L\cong\langle b\rangle\cong\Z$ and $\pi_1W_{L'}\cong\langle
  b\rangle\cong\Z$.  Observe that diagram
  \eqref{diagram:delta-enhanced} is commutative for $k=1$ but not for
  $k=2$.  Hence Theorem \ref{thm:FR-generalization} can not be used
  here to deduce that $L$ and $L'$ are $\delta$-equivalent.

  In fact, $L$ and $L'$ are {\em not} $\delta$-equivalent.  We can
  verify this fact as follows.  Let $T$ be a tubular neighborhood of
  $\beta $ in $M$.  Let $K$ be a small $0$-framed unknot meridional to
  $T$.  Let $J$ be a knot in $\int V_1$, to which the loop $b$ is
  meridional, as depicted in Figure \ref{F04}(a), (b), and let $N(J)$
  denote a small tubular neighborhood of $J$ in $V_1$.  \begin{figure}[t]
    \begin{center}\input{F04.pstex_t}\end{center}
    \caption{}
    \label{F04}
  \end{figure} Set
  $M'=S^3\setminus \int N(J)$, which is homeomorphic to a solid torus.
  Let $K_1$ and $K_2$ be framed knots as depicted.  It suffices to
  prove that the framed links $\tilde L=L\cup K\cup K_1\cup K_2$ and
  $\tilde L'=L'\cup K\cup K_1\cup K_2$ in $M'$ are not
  $\delta$-equivalent.  Observe that $\tilde L$ (resp. $\tilde L'$) is
  $\delta$-equivalent to the $3$-component link depicted in Figure
  \ref{F04}(c) (resp. (d)).  (These links are the Borromean rings in
  $S^3$ with $0$-framings.)  One can show that these two links are not
  $\delta$-equivalent by using the invariant $B$ of framed links
  defined in Subsection \ref{aa} below.  For the framed links $L_c$
  and $L_d$ of Figure \ref{F04} (c) and (d), respectively, we have
  $B(L_c)=\{0\}$ and $B(L_d)=\Z$.

\subsection{An invariant of  $O_n$-$\pi_1$-admissible framed links in
  the exterior of an unknot in $S^3$}
\label{aa}
\newcommand\cups{\cup\cdots\cup}
\newcommand\bm{\bar\mu}
For $n\ge0$, let $O_n$ and $I_n$ denote the zero matrix and the
identity matrix, respectively, of size $n$.
For $p,q\ge0$, set $I_{p,q}=I_p\oplus(-I_q)$, where $\oplus$ denotes
block sum.

Let $J$ be an unknot in $S^3$ and set $E=S^3\setminus\int N(J)\cong
S^1\times D^2$, where $N(J)$ is a tubular neighborhood of $J$.  

Let $L=L^z_1\cups L^z_n\cup L^a_1\cups L^a_{p+q}$, $n,p,q\ge0$, be an
oriented, ordered, null-homotopic framed link in $E$ whose linking
matrix is of the form $O_n\oplus I_{p,q}$.  Let us call
such a framed link {\em $O_n$-$\pi_1$-admissible}.  Let us call
$L^z_1,\dots,L^z_n$ the {\em $z$-components} of $L$, and
$L^a_1,\dots,L^a_{p+q}$ the {\em
$a$-components} of $L$.

Since $L^z_1\cups L^z_n\cup J$ is algebraically
split, for $1\le i<j \le n$ the triple Milnor invariant
$\bar\mu(L^z_i,L^z_j,J)\in\Z$ is well defined.  Set
\begin{gather*}
  B(L)=\operatorname{Span}_{\Z}\{\bar\mu(L^z_i,L^z_j,J)\;|\;1\le i<j \le n\},
\end{gather*}
which is a subgroup of $\Z$.  Note that $B(L)$ does not depend on
the $a$-components of $L$.  Note also that $B(L)$ does not depend on the
ordering and orientations of the $z$-components of $L$.

\begin{lem}
  \label{r7}
  $B(L)$ is invariant under handle-slide of a $z$-component over 
  another $z$-component.
\end{lem}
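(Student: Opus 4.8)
The plan is to show that a handle-slide of one $z$-component over another changes the collection of triple Milnor invariants $\bar\mu(L^z_i,L^z_j,J)$ only by an invertible linear change of basis of the span, so that the subgroup $B(L)\subseteq\Z$ is unchanged. First I would recall the basic additivity property of the triple linking number under band-sum: if $L^z_i$ is replaced by a band-sum $L^z_i{}'$ of $L^z_i$ with a parallel copy of $L^z_k$ (for some $k\ne i$, $k\le n$), then for any third component one has $\bar\mu(L^z_i{}',L^z_j,J)=\bar\mu(L^z_i,L^z_j,J)+\bar\mu(L^z_k,L^z_j,J)$ whenever $j\ne i,k$. This is the standard behaviour of Milnor's $\bar\mu$-invariants of length $3$ under the band-sum operation, valid here because $L^z_1\cup\cdots\cup L^z_n\cup J$ is algebraically split so all the length-$2$ Milnor invariants (linking numbers) vanish and the length-$3$ invariants are honest integers with no indeterminacy. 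I would state this additivity as the key input and give a one-line justification (e.g. via the effect of a band-sum on a Magnus expansion / the Milnor $\mu$-invariants, or by the well-known fact that $\bar\mu(-,-,J)$ descends to a well-defined pairing on the relevant quotient that is additive in each slot).

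Next I would carry out the bookkeeping. Write $m_{ij}=\bar\mu(L^z_i,L^z_j,J)\in\Z$ for $1\le i<j\le n$, extended antisymmetrically, and note $B(L)=\operatorname{Span}_\Z\{m_{ij}\}$. A handle-slide of $L^z_i$ over $L^z_k$ (with $i,k\le n$, $i\ne k$) replaces the matrix $(m_{ij})_j$ by $(m_{ij}+m_{kj})_j$ for $j\ne i,k$; the entries not involving $i$ are untouched, and the entry between the two slid components, $m_{ik}$, is also unchanged (sliding $L^z_i$ over a parallel copy of $L^z_k$ does not alter $\bar\mu(L^z_i{}',L^z_k,J)$, since the correction term $\bar\mu(L^z_k,L^z_k,J)$ is zero — a component is not linked with its own parallel in the relevant sense). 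Hence every new invariant $m'_{ab}$ is a $\Z$-linear combination of the old ones, so $B(L')\subseteq B(L)$. Applying the inverse handle-slide gives the reverse inclusion, whence $B(L')=B(L)$.

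Finally I would handle the remaining case: a handle-slide of a $z$-component over another $z$-component could also occur where the component being slid over is the one ``fixed'' — but by symmetry of antisymmetry this is the same computation. I should also note that a handle-slide of a $z$-component over an $a$-component, or of an $a$-component over anything, is not covered by this lemma (and indeed $B$ is defined to ignore the $a$-components, so such slides are irrelevant here); the lemma only claims invariance under handle-slide of a $z$-component over another $z$-component. The main obstacle is purely the first step: pinning down the precise additivity formula for $\bar\mu$ of length $3$ under a band-sum with the correct sign and the vanishing of the self-correction term. Once that formula is in hand, the rest is the elementary observation that the transformation of the matrix $(m_{ij})$ is by an integral elementary row/column operation, which preserves the $\Z$-span of the entries.
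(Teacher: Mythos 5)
Your proposal is correct and follows essentially the same route as the paper: both rest on the additivity formula $\bar\mu((L')^z_1,(L')^z_i,J)=\bar\mu(L^z_1,L^z_i,J)+\bar\mu(L^z_2,L^z_i,J)$ for the band-summed component, the invariance of the entry between the two slid components, and the observation that the resulting elementary integral transformation of the matrix $(m_{ij})$ preserves the $\Z$-span. The only difference is cosmetic: you obtain equality of spans via the inverse handle-slide, while the paper reads it off directly from the three displayed formulas.
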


\begin{proof}
  It suffices to consider a handle-slide of $L^z_1$ over $L^z_2$.  The
  link obtained from $L$ by this handle-slide is $L'=(L')^z_1\cup
  (L')^z_2\cups (L')^z_n\cup (L')^a_1\cups (L')^a_{p+q}$, where
  $(L')^z_1=L^z_1\sharp_b \tilde L^z_2$ is a band sum of $L^z_1$ and a
  parallel copy $\tilde L^z_2$ of $L^z_2$ along a band $b$, and
  $(L')^z_i=L^z_i$ for $i=2,\dots,n$.  We have
  \begin{gather*}
    \begin{split}
      \bm((L')^z_1,(L')^z_2,J)&=\bm(L^z_1,L^z_2,J),\\
    \bm((L')^z_1,(L')^z_i,J)&=\bm(L^z_1,L^z_i,J)+\bm(L^z_2,L^z_i,J)\quad
    (2\le i\le n),\\
    \bm((L')^z_i,(L')^z_j,J)&=\bm(L^z_i,L^z_j,J)\quad (2\le i<j\le n).
    \end{split}
  \end{gather*}
  Hence we have $B(L')=B(L)$.
\end{proof}

\begin{lem}
  \label{r6}
  $B(L)$ is invariant under band-slides.
\end{lem}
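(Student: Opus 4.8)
The plan is to imitate the proof of Lemma~\ref{r7}, using that a band-slide replaces a component by a band sum of it with two parallel copies of another component taken with opposite orientations, together with additivity of the triple Milnor invariant under band sums. First note that a band-slide preserves $O_n$-$\pi_1$-admissibility, so that $B$ is defined on both sides: the ``algebraically cancelling'' hypothesis leaves the linking matrix $O_n\oplus I_{p,q}$ unchanged, and band-summing null-homotopic curves along bands produces null-homotopic curves. I would then split into two cases according to which component slides.

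If the sliding component is an $a$-component, the band-slide leaves all the $z$-components and $J$ untouched, so $B(L)$ is unchanged. Suppose instead that a $z$-component $L^z_1$ is slid over a component $L_r$ (a $z$- or an $a$-component), and realise the band-slide as two successive handle-slides of $L^z_1$ over $L_r$: the first band-sums $L^z_1$ with a parallel copy $L_r^+$ of $L_r$, the second band-sums the result with a parallel copy $L_r^-$ of $-L_r$, producing $(L')^z_1$; the remaining components are unchanged. I would then invoke: (i) for each $z$-component $L^z_i$ with $i\neq1$, each of $L^z_1\cup L^z_i\cup J$, $L_r\cup L^z_i\cup J$, $(L^z_1\sharp L_r^+)\cup L^z_i\cup J$ and $(L')^z_1\cup L^z_i\cup J$ is algebraically split, because the linking matrix of $L$ is $O_n\oplus I_{p,q}$ and every component of $L$ has linking number $0$ with $J$ (being null-homotopic in $E$); hence every triple Milnor invariant below is a well-defined integer; (ii) $\bar\mu$ is additive in its first entry under band sum, $\bar\mu(K\sharp_b K',L^z_i,J)=\bar\mu(K,L^z_i,J)+\bar\mu(K',L^z_i,J)$, which follows from the description of $\bar\mu$ as a signed count of triple points of transverse Seifert surfaces, a band contributing no triple points; (iii) $\bar\mu$ is unchanged when an entry is replaced by a parallel copy and is negated when the orientation of one entry is reversed. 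Applying (ii) to the two handle-slides and then (iii) to identify $\bar\mu(L_r^+,L^z_i,J)=\bar\mu(L_r,L^z_i,J)=-\bar\mu(L_r^-,L^z_i,J)$, the two contributions cancel, so $\bar\mu((L')^z_1,L^z_i,J)=\bar\mu(L^z_1,L^z_i,J)$; the triple Milnor invariants not involving $L^z_1$ are visibly unchanged, whence $B(L')=B(L)$. (When $L_r$ is a $z$-component, this reduces to two applications of Lemma~\ref{r7}.)

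The step requiring genuine care is the additivity (ii) in this only partially split situation: the intermediate link is not $\pi_1$-admissible, since $\operatorname{lk}(L^z_1\sharp L_r^+,L_r)$ can equal $\pm1$, so one must verify that additivity of $\bar\mu(\,\cdot\,,L^z_i,J)$ only needs the linking numbers of the two band-summands with $L^z_i$ and with $J$ to vanish (which holds here), and is insensitive to the linking number between the two summands. The triple-point model of $\bar\mu$ makes this transparent; everything else is formal.
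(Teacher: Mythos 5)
Your proof is correct, and for the one nontrivial case --- a band-slide of a $z$-component over an $a$-component --- it takes a genuinely different route from the paper. The paper's proof handles that case by first performing the \emph{$0$-framed} version of the band-slide to obtain an auxiliary link $L''$, arguing $B(L'')=B(L)$ by the same additivity computation as in Lemma~\ref{r7}, and then observing that the actual ($\pm1$-framed) result $L'$ differs from $L''$ only by self-crossing changes of the slid component, so $B(L')=B(L'')$ by link-homotopy invariance of $\bar\mu$. You instead work directly with the $\pm1$-framed parallels $L_r^{\pm}$, noting that the framing of a push-off is irrelevant to $\bar\mu(\,\cdot\,,L^z_i,J)$ because the push-off is isotopic to $L_r$ in the complement of $L^z_i\cup J$, so the two contributions cancel on the nose. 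Your observation that the intermediate link after a single handle-slide fails to be $\pi_1$-admissible, but that this is harmless because $\bar\mu(A,B,C)$ only requires the three-component sublink $A\cup B\cup C$ to be algebraically split, is exactly the right point and is what makes the direct argument go through; it even yields the slightly stronger conclusion that each individual $\bar\mu(L^z_i,L^z_j,J)$ (not merely the span $B$) is preserved by band-slides. The paper's detour through the $0$-framing buys a reduction to the literal setting of Lemma~\ref{r7} at the cost of invoking link-homotopy invariance; your version is more self-contained but leans a bit harder on the band-sum additivity.

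One small caveat: your justification of additivity (ii) via the triple-point count of Seifert surfaces is looser than you suggest --- the band is $2$-dimensional and generically meets the double curve $\Sigma_{L^z_i}\cap\Sigma_J$ in isolated points, so "a band contributing no triple points" is not automatic. The additivity itself is a standard fact (cleanest via the Magnus expansion: a meridian of the band sum receives the sum of the degree-one contributions of the two summands, and the cross terms land in $X_1^2$ rather than $X_1X_2$ when the relevant linking numbers vanish), and it is the same fact the paper already uses without proof in Lemma~\ref{r7}, so this does not affect the validity of your argument --- but the geometric justification as written would need repair.
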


\begin{proof}
  Clearly, a band-slide of an $a$-component over another ($z$- or
  $a$-) component preserves $B$.  Lemma \ref{r7} implies that a
  band-slide of a $z$-component over another $z$-component preserves
  $B$.

  Consider a band-slide of a $z$-component $L^z_1$ of $L$ over an
  $a$-component $L^a_1$ of $L$.  Let $L'$ be the resulting link.  Let
  $L''$ denote the result from $L$ by the same band-slide as before, 
  but we use here the $0$-framing of $L^a_1$ for the band-slide.  By the
  previous case, it follows that $B(L'')=B(L)$.  The $z$-part
  $(L')^z(=(L')^z_1\cups (L')^z_n)$ of  $L'$ differs from the
  $z$-part $(L'')^z$ of $L''$ by self-crossing change of the component
  $(L')^z_1$.  Since the triple Milnor invariant is invariant under
  link homotopy, it follows that $B(L')=B(L'')$.  Hence $B(L)=B(L')$.
\end{proof}

\begin{prop}
  \label{r2}
  If two $O_n$-$\pi_1$-admissible framed links $L$ and $L'$ are
  $\delta$-equivalent, then we have $B(L)=B(L')$.
\end{prop}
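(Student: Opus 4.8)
The plan is to show that $B$ is unchanged under each of the generating moves of $\delta$-equivalence, namely stabilizations and handle-slides, applied to an $O_n$-$\pi_1$-admissible framed link in $E$. Since $B(L)$ depends only on the $z$-components and their triple Milnor invariants with $J$, most of the work is already packaged in the preceding lemmas. First I would observe that stabilization adds or removes an isolated $\pm1$-framed unknot, which can never be a $z$-component (its self-linking is $\pm1$, not $0$) and has vanishing linking number and triple Milnor invariants with everything in sight; hence the collection $\{\bar\mu(L^z_i,L^z_j,J)\}$ is literally unchanged, so $B$ is preserved. One subtlety to address: among $O_n$-$\pi_1$-admissible links, a handle-slide must be performed so that the result is again $O_n$-$\pi_1$-admissible, so I would restrict attention to handle-slides that are compatible with this structure.

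Next I would enumerate the types of handle-slides by the types of the two components involved. A handle-slide of an $a$-component over another $a$- or $z$-component does not alter any $z$-component as an unoriented, unframed curve up to the relevant equivalence (it changes only the $a$-part, modulo adjusting framings to keep the linking matrix of the form $O_n\oplus I_{p,q}$), so the triple Milnor invariants $\bar\mu(L^z_i,L^z_j,J)$ are untouched and $B$ is preserved. A handle-slide of a $z$-component over another $z$-component is exactly the content of Lemma \ref{r7}, which gives $B(L')=B(L)$. The remaining case is a handle-slide of a $z$-component $L^z_i$ over an $a$-component $L^a_j$; but note that to keep the linking matrix block-diagonal, such a single handle-slide would have to be part of an algebraically cancelling pair — that is, a band-slide — and Lemma \ref{r6} already shows $B$ is preserved under band-slides (its proof handles precisely the case of a $z$-component sliding over an $a$-component by comparing with the $0$-framed band-slide and invoking link-homotopy invariance of $\bar\mu$).

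Putting these together: any sequence of stabilizations and handle-slides connecting $L$ to $L'$ within the class of $O_n$-$\pi_1$-admissible links can be reorganized (using Proposition \ref{prop:6}, or directly) into a sequence of stabilizations and moves each of which is one of the four cases above, and each such move preserves $B$. Therefore $B(L)=B(L')$, which is the claim. The main obstacle I anticipate is the bookkeeping in the handle-slide analysis: one must be careful that ``$\delta$-equivalence of $O_n$-$\pi_1$-admissible links'' is witnessed by moves staying inside (or passing through controlled enlargements of) this class, so that the framing and algebraic-splitness constraints are respected at every step; invoking the equivalence of handle-slides with band-slides for $\pi_1$-admissible links (Proposition \ref{prop:6}) is the clean way to sidestep the case of a lone $z$-over-$a$ handle-slide, and I would lean on that rather than re-deriving the Milnor-invariant transformation formulas by hand.
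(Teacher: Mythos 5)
There is a genuine gap in the middle of your argument, and it is exactly the point where the paper has to do real work. The hypothesis of Proposition \ref{r2} is only that $L$ and $L'$ are $\delta$-equivalent, i.e.\ joined by \emph{some} sequence of stabilizations and handle-slides; the intermediate links in such a sequence need not be $O_n$-$\pi_1$-admissible at all. A single handle-slide of a $z$-component over an $a$-component changes the self-linking of that $z$-component from $0$ to $\pm1$, so the intermediate link leaves the class on which $B$ is even defined, and there is no reason the very next move in the given sequence undoes this. You correctly flag this as the main obstacle, but your proposed fix --- ``reorganize the sequence using Proposition \ref{prop:6}'' --- does not work: Proposition \ref{prop:6} applies to $\pi_1$-admissible links, whose linking matrix is diagonal with entries $\pm1$, whereas an $O_n$-$\pi_1$-admissible link with $n>0$ has $z$-components of framing $0$ and is therefore not $\pi_1$-admissible. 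So the reorganization cannot be obtained by citing that proposition, and ``or directly'' is precisely the step that needs a proof.

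The paper fills this gap with a different mechanism. Since $L$ and $L'$ have the same linking matrix $O_n\oplus I_{p,q}$, the invertible matrix $\varphi(S)$ associated to the handle-slide sequence lies in the stabilizer $H_{n,p,q}=\{P : P(O_n\oplus I_{p,q})P^t=O_n\oplus I_{p,q}\}$. One then checks that $H_{n,p,q}$ is generated by three explicit types of matrices, writes $\varphi(S)$ as a word in these generators, and (by the realization argument of \cite{Habiro:2006}) replaces the original sequence by a chain $L=L^{(0)},\dots,L^{(k)}=L''$ of $O_n$-$\pi_1$-admissible links in which each step realizes one generator --- hence is either a sequence of slides of $a$-components over other components, or of $z$-components over $z$-components (Lemma \ref{r7}) --- followed by band-slides from $L''$ to $L'$ (Lemma \ref{r6}). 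Your case analysis of the individual moves is fine and matches the paper's, but without this group-theoretic reorganization step the case analysis never gets to be applied, so the proof as proposed is incomplete.
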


\begin{proof}
  We give a sketch proof assuming familiarity with techniques on
  framed links developed in \cite{Habiro:2006}.

  If $L$ and $L'$ are $\delta$-equivalent, then after adding to $L$
  and $L'$ some unknotted $\pm1$-framed components by stabilizations,
  $L$ and $L'$ become related by a sequence of handle-slides.
  Clearly, stabilization on an $O_n$-$\pi_1$-admissible framed link
  preserves $B$.  So, we may assume that $L$ and $L'$ are related by a
  sequence of handle-slides.  It follows that $L$ and $L'$ have the
  same linking matrix $O_n\oplus I_{p,q}$, $n,p,q\ge0$.

  Recall that for each sequence $S$ of handle-slides between oriented,
  ordered framed links there is an associated invertible matrix
  $\varphi(S)$ with coefficients in $\Z$, see e.g. \cite{Habiro:2006}.
  In our case, a sequence from $L$ to $L'$ gives a matrix $P\in
  GL(n+p+q;\Z)$ such that
  \begin{gather}
    \label{e7}
    P (O_n\oplus I_{p,q}) P^t = (O_n\oplus I_{p,q}).
  \end{gather}
  (Here $P^t$ denotes the transpose of $P$.)
  Let $H_{n,p,q}<GL(n+p+q;\Z)$ denote the subgroup consisting of matrices
  satisfying \eqref{e7}.  It is easy to see that $H_{n,p,q}$ is
  generated by the following elements.
  \begin{itemize}
  \item[(a)] $Q\oplus I_{p+q}$, where $Q\in GL(n;\Z)$.
  \item[(b)] $\left(
    \begin{matrix}
      I_n &0\\X&I_{p+q}
    \end{matrix}
    \right)$, where $X\in \operatorname{Mat}_\Z(p+q,n)$.
  \item[(c)] $I_n\oplus R$, where $R\in O(p,q;\Z)=\{T\in
  GL(p+q;\Z)\;|\;TI_{p,q}T^t=I_{p,q}\}$.
  \end{itemize}
  Hence $\varphi(S)$ can be expressed as
  \begin{gather*}
    \varphi(S)=w_1^{\epsilon_1}\cdots w_k^{\epsilon_k},
  \end{gather*}
  where $k\ge0$, $\epsilon_1,\ldots,\epsilon_k\in\{\pm1\}$, and
  $w_1,\dots,w_k\in H_{n,p,q}$ are generators of the above form.
  
  By an argument similar to that in \cite{Habiro:2006}, we can show
  that there are framed links  $L^{(0)}=L,L^{(1)},\dots L^{(k)}=L''$ such that
  \begin{enumerate}
  \item for $i=1,\dots,k$, $L^{(k-1)}$ and $L^{(k)}$ are related by a
  sequence $S_i$ of handle-slides, orientation changes and permutations with
  associated matrix $\varphi(S_i)=w_i^{\epsilon_i}$,
  \item there is a sequence of band-slides from $L''$ and $L'$.
  \end{enumerate}
  Here the framed links $L^{(0)},\dots,L^{(k)}$ are
  $O_n$-$\pi_1$-admissible.

  Let $i=1,\dots,k$.  If $w_i$ is a generator of type (b) or (c), then
  we have $B(L^{(i-1)})=B(L^{(i)})$ since
  $S_i$ is a sequence of handle-slides of $a$-components over other
  ($z$- or $a$-)components.  If $w_i$ is a generator of type (a), then
  $S_i$ is a sequence of orientation changes of $z$-components,
  permutations of $z$-components, and handle-slides of $z$-components
  over $z$-components.  Clearly, orientation changes and permutations
  preserve $B$.  Handle-slides of $z$-components over $z$-components also preserve $B$ by
  Lemma \ref{r7}.

  By Lemma \ref{r6}, we have $B(L'')=B(L')$.

  Hence we have $B(L)=B(L')$.
\end{proof}

\bibliographystyle{plain} 

\end{document}